\def\NAT@def@citea{\def\@citea{\NAT@separator}}
\theoremstyle{plain}
\newtheorem{theorem}{Theorem}[section]
\newtheorem{proposition}[theorem]{Proposition}
\theoremstyle{definition}
\newtheorem{definition}[theorem]{Definition}
\newtheorem{example}[theorem]{Example}
\theoremstyle{remark}
\newtheorem{remark}{Remark}
\begin{document}

\articletype{ARTICLE TEMPLATE}

\title{Approximation of an algebra by evolution algebras}

\author{
\name{A.~N. Imomkulov\textsuperscript{a}\thanks{CONTACT A.~N. Imomkulov. Email: aimomkulov@gmail.com} and U.~A. Rozikov\textsuperscript{a}}
\affil{\textsuperscript{a}Institute of Mathematics, 81 Mirzo Ulug'bek str., Tashkent, Uzbekistan.}
}

\maketitle

\begin{abstract}
It is known that any multiplication of a finite dimensional algebra is
determined by a matrix of structural constants.
In general, this is a cubic matrix. Difficulty of investigation of an algebra depends on the cubic matrix.
Such a cubic matrix defines a quadratic mapping called an evolution operator.
In the case of evolution algebras, the cubic matrix consists many zeros allowing to reduce it
to a square matrix.
 In this paper for any finite dimensional algebra we construct a family of evolution algebras
corresponding to Jacobian of the evolution operator at a point of the algebra. We obtain some results answering
 how properties of an algebra depends on the properties of the corresponding family of evolution algebras.
Moreover, we consider evolution algebras corresponding
to 2 and 3-dimensional nilpotent Leibniz algebras. We prove that such evolution algebras are
nilpotent too. Also we classify such evolution algebras.
\end{abstract}

\begin{keywords}
Evolution algebra, evolution operator, homomorphism,
isomorphism, Jacobian, Leibniz algebra, fixed point
\end{keywords}

\section{Introduction}\label{sec:intro}

Let $(A,\cdot)$ be an algebra over a field $K$. If it admits a basis
 $e_{1},e_{2},\dots,e_{n},\dots$, such that
 $$\begin{array}{cc}
 e_{i}\cdot e_{j}=0, \,\, \mbox{if} \,\,  i\neq j\\[2mm]
 e_{i}\cdot e_{i}=\sum_{k} a_{ik}e_{k}, \,\, \mbox{for any} \,\, i
\end{array}$$
then it is called an evolution algebra. This basis is called a natural basis.

We note that to every evolution algebra corresponds a \emph{square} matrix
$(a_{ik})$ of structural constants. This makes simpler the investigation
of an evolution algebra compared to a general algebra. Because, a multiplication of a general algebra can be given
by a \emph{cubic} matrix.

 In \cite{11} the following basic properties of evolution algebras are proved:
 they are  commutative, flexible; are not power-associative and not associative, in general.
The direct sum of evolution algebras is also an evolution algebra.
For recently obtained results related to evolution algebras  see \cite{1},
\cite{12}, \cite{13}, \cite{CLOR1}, \cite{CLR}, \cite{3}-\cite{5},  \cite{8}, \cite{ORV}, \cite{9}  and the references therein.

In \cite{8} the dynamics of absolute
nilpotent and idempotent elements in chains generated by two-dimensional
evolution algebras are studied. In \cite{2} the authors consider
an evolution algebra which has a rectangular matrix of structural constants.
This algebra is called evolution algebras of  \,
\textquotedblleft chicken\textquotedblright\,  population (EACP).
The classification of three-dimensional complex EACPs is obtained.
Moreover, some $(n + 1)$-dimensional EACPs are described.

Thus the theory of evolution algebras is well developed.
In the study of nonlinear dynamical systems, a linearization is approximation of the nonlinear mapping by the linear one, that is defined by the Jacobian
 of the nonlinear mapping at a fixed point. In this paper we use this linearization for
 a quadratic operator corresponding to cubic matrix of structural constants of a finite
 dimensional algebra. By the linear operators (corresponding to Jacobian of the quadratic operator at a point of the algebra)
 we construct a family of evolution algebras.
We give relations between given general algebra and corresponding to it evolution algebras.

\section{Approximation of finite dimensional algebras with evolution algebras}\label{sec:two}

Given a field $K$, any finite-dimensional algebra can be specified up
to isomorphism by giving its dimension (say $m$), and specifying $m^3$ structure constants $\gamma_{ij,k}$, which are scalars.
These structure constants determine the multiplication in $\mathcal A$ via the following rule:
\[{e}_{i} {e}_{j} = \sum_{k=1}^m \gamma_{ij,k} {e}_{k},\]
where $e_1,\dots,e_m$ form a basis of $\mathcal A$.

Thus the multiplication of a finite-dimensional algebra is given by a cubic matrix $(\gamma_{ij,k})$.
This matrix also defines an evolution operator $F: K^m\to K^m$ as
$$F:x_{k}'=\sum_{i,j}\gamma_{ij,k}x_{i}x_{j}, \ \ k=1,\dots, m.$$
The investigation of the algebra $\mathcal A$ and the quadratic operator $F$ is difficult,
since they are determined by cubic matrices.
 While this investigation can be simplified by taking some values of $\gamma_{ij,k}$ equal to zero.
Non-linear functions, like $F$,  mainly reduced to linear functions (linearization) by Jacobian
to prove the local and some global results (see for example \cite{Ga}, \cite{Rb}).

Jacobian of this operator is the following:
$$J_{F}(x)=\left(\sum_{j}\gamma_{pj,k}x_{j}+\sum_{i}\gamma_{ip,k}x_{i}\right)_{p,k=1}^n=
\left(\sum_{i=1}^n(\gamma_{pi,k}+\gamma_{ip,k})x_{i}\right)_{p,k=1}^n.$$
Since this is a square matrix we can define an evolution algebra with structural
constants matrix $J_{F}(x)$. To do this, denote
\begin{equation}\label{svyaz}
\beta_{pk}(x)=\sum_{i}(\gamma_{pi,k}+\gamma_{ip,k})x_{i}
\end{equation}
and define an evolution algebra $E_{x}=<\tilde{e_{1}},\tilde{e_{2}},...,\tilde{e_{n}}>$ by multiplication
$$\tilde{e_{i}}\tilde{e_{j}}=0, \ \ \tilde{e_{i}}^2=\sum_{k}\beta_{ik}(x)\tilde{e_{k}}.$$

Note that this kind of evolution algebra first was defined in \cite{10} for an algebra
of mosquito population.

Thus (\ref{svyaz}) is a relation between algebras $\mathcal A$ and $E_{x}$.
Similarly to the theory of dynamical systems we call $E_x$ the {\it approximation} of
$\mathcal A$ at point $x$.


{\bf The main question } of this section is  ``What can we say about algebra $\mathcal A$
with matrix of structural constants $(\gamma_{pi,k})_{p,i,k=1}^{n}$
knowing properties of the evolution algebra $E_x$ for some $x\in \mathcal A$ with the relation (\ref{svyaz})?"

%

For an algebra $\mathcal A$ with matrix of structural constants
$M_{\mathcal A}=(\gamma_{pi,k})_{p,i,k=1}^{n}$, denote
$$I_{\mathcal A}=\{p\in\{1,2,...,n\}: \det\Gamma_{p}\neq0\},$$
where $\Gamma_{p}=\left(\gamma_{pi,k}+\gamma_{ip,k}\right)_{i,k=1}^n$.

 Fix $x\in \mathcal A$ and denote by $a_{p}\equiv a_p(x)$ the vector
considered as $p$-th column of matrix $(\beta_{pk}(x))$ defined in (\ref{svyaz}).

\begin{theorem}\label{mavjudlik} For a given non-trivial evolution algebra $E$ with
$\mathcal M_{E}=\left(a_{pk}\right)$ and an algebra $\mathcal A$ with
$\mathcal M_{\mathcal A}=\left(\gamma_{pi,k}\right)$ there is an element $x\in \mathcal A\backslash \{0\}$ satisfying (\ref{svyaz})
if and only if
\begin{equation}\label{umumiyshart1}
\Gamma_{p}^{-1}\cdot a_{p}=\Gamma_{q}^{-1}\cdot a_{q}, \,\,\, for\,\,\, any\,\,\, p,q\in I_{\mathcal A}
\end{equation}
and
\begin{equation}\label{umumiyshart2}
{\rm rank}\, \Gamma_{p}={\rm rank}\left(\Gamma_{p},a_{p}\right),\,\,\, for\,\,\, any\,\,\, p=\{1,2,...,n\}\backslash I_{\mathcal A}.
\end{equation}
\end{theorem}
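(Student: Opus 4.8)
The plan is to interpret the relation (\ref{svyaz}) as the system of equations $\beta_{pk}(x)=a_{pk}$, $1\le p,k\le n$, that expresses the prescribed structure constants of $E$ through the unknown vector $x$. Because $\beta_{pk}(x)=\sum_i(\gamma_{pi,k}+\gamma_{ip,k})x_i$ depends linearly on $x$, this is a system of $n^2$ linear equations in the $n$ unknowns $x_1,\dots,x_n$. First I would split it by the index $p$: for each fixed $p$, letting $k$ run produces a linear subsystem whose coefficient matrix is exactly $\Gamma_p$ and whose right-hand side is the associated vector $a_p$ read off from $\mathcal M_E$. Solving (\ref{svyaz}) then means producing a single $x$ that is a common solution of all $n$ subsystems $\Gamma_p x=a_p$. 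I would also note immediately that non-triviality of $E$ gives some $a_{pk}\neq0$, while $\beta_{pk}(0)=0$; hence $x=0$ is never a solution and every solution automatically lies in $\mathcal A\setminus\{0\}$, so that requirement needs no separate argument.

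For the necessity direction I would take a solution $x$ and confront it with the subsystems one $p$ at a time. If $p\in I_{\mathcal A}$ then $\det\Gamma_p\neq0$, so $\Gamma_p x=a_p$ has the unique solution $x=\Gamma_p^{-1}a_p$; as the same $x$ serves every such $p$, this forces $\Gamma_p^{-1}a_p=\Gamma_q^{-1}a_q$ for all $p,q\in I_{\mathcal A}$, which is (\ref{umumiyshart1}). If $p\notin I_{\mathcal A}$, then the existence of a solution of $\Gamma_p x=a_p$ means $a_p$ lies in the column span of $\Gamma_p$, i.e. the Kronecker--Capelli equality ${\rm rank}\,\Gamma_p={\rm rank}(\Gamma_p,a_p)$ of (\ref{umumiyshart2}) holds. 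This half is routine bookkeeping and I anticipate no obstacle.

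The real content is sufficiency, where I must build one $x$ solving all subsystems at once. If $I_{\mathcal A}\neq\emptyset$, I would fix $p_0\in I_{\mathcal A}$ and set $x:=\Gamma_{p_0}^{-1}a_{p_0}$; by (\ref{umumiyshart1}) this $x$ already satisfies every invertible block. The hard part will be the singular blocks $p\notin I_{\mathcal A}$: condition (\ref{umumiyshart2}) only certifies that each such subsystem is consistent on its own, not that the specific $x$ pinned down by the invertible blocks belongs to its (in general positive-dimensional, affine) solution set --- and when $I_{\mathcal A}=\emptyset$ there is no distinguished $x$ to start from at all. To handle this I would assemble the full stacked $n^2\times n$ matrix $M$ with vertical blocks $\Gamma_1,\dots,\Gamma_n$ and stacked right-hand side $b$, and apply Kronecker--Capelli globally: a common $x$ exists iff ${\rm rank}\,M={\rm rank}(M,b)$. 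The crux, and the step I expect to fight with, is deducing this single global rank identity from the block conditions (\ref{umumiyshart1}) and (\ref{umumiyshart2}); concretely one must rule out the possibility that individually consistent singular blocks are jointly incompatible with the value fixed by the invertible blocks. I would try to close it either by exploiting that all $\Gamma_p$ are built from the one cubic array $(\gamma_{ij,k})$ --- so that their solution sets are forced to cohere --- or, failing that, by recognising that an additional cross-compatibility condition is implicitly needed; pinning down exactly why (\ref{umumiyshart1})--(\ref{umumiyshart2}) already guarantee joint solvability is where the essential difficulty of the theorem lies.
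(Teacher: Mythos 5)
Your necessity argument is precisely the paper's proof of Theorem \ref{mavjudlik} --- and it is the \emph{entire} proof given there: the paper splits (\ref{svyaz}) into the $n$ blocks $\Gamma_p x=a_p$, notes that any solution must equal $\Gamma_p^{-1}a_p$ for every $p\in I_{\mathcal A}$ (which gives (\ref{umumiyshart1})) and must make every singular block consistent (which gives (\ref{umumiyshart2})), and stops. The converse, which you rightly identify as the substantial half of the ``if and only if'', is never addressed in the paper, so up to that point your proposal reproduces everything the paper actually does.

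The difficulty you then flag is not a weakness of your attempt but a genuine defect of the statement: conditions (\ref{umumiyshart1})--(\ref{umumiyshart2}) are necessary but \emph{not} sufficient, so the global rank identity you were trying to derive from them simply does not follow, and the coupling between the blocks (the coefficient of $x_i$ in the $(p,k)$-th equation equals the coefficient of $x_p$ in the $(i,k)$-th equation) does not force the blockwise solution sets to intersect. Concretely, let $n=2$, let $\mathcal A$ be the commutative algebra with $e_1e_1=e_1$ and $e_1e_2=e_2e_1=e_2e_2=\frac{1}{2}(e_1+e_2)$, and let $E$ be the evolution algebra with $\tilde e_1^2=2\tilde e_1$, $\tilde e_2^2=5\tilde e_1+5\tilde e_2$. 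Then (\ref{svyaz}) is the system
$$2x_1+x_2=2,\qquad x_2=0,\qquad x_1+x_2=5,\qquad x_1+x_2=5,$$
so that, in the same notation as the paper's Examples, $\Gamma_1=\left(\begin{array}{cc}2&1\\0&1\end{array}\right)$, $\Gamma_2=\left(\begin{array}{cc}1&1\\1&1\end{array}\right)$, $a_1=\left(\begin{array}{c}2\\0\end{array}\right)$, $a_2=\left(\begin{array}{c}5\\5\end{array}\right)$, and $I_{\mathcal A}=\{1\}$. Condition (\ref{umumiyshart1}) is vacuous and condition (\ref{umumiyshart2}) holds, since ${\rm rank}\,\Gamma_2={\rm rank}\,(\Gamma_2,a_2)=1$; nevertheless the first block forces $x=(1,0)$ while the second demands $x_1+x_2=5$, so no $x$ whatsoever satisfies (\ref{svyaz}). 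Your closing guess is therefore the correct resolution: one must either add a cross-compatibility hypothesis --- e.g.\ that the common point $\Gamma_p^{-1}a_p$ ($p\in I_{\mathcal A}$) also solves the singular blocks, equivalently the Kronecker--Capelli condition for the full stacked $n^2\times n$ system --- or restrict the theorem to the case $I_{\mathcal A}=\{1,\dots,n\}$, where (\ref{umumiyshart1}) alone does produce a solution (nonzero because $E$ is non-trivial, exactly as you observed).
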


\begin{proof}
We must answer for the question: When does the system of equations (\ref{svyaz}) have a solution
relatively unknown $x=(x_{1},...,x_{n})$? In this case we have $n^2$ equations and $n$ unknowns.
By theory of system of linear equations it is easy to see that $x=\Gamma_{p}^{-1}\cdot a_{p},\,\, p\in I_{\mathcal A}$
and so  $\Gamma_{p}^{-1}\cdot a_{p}=\Gamma_{q}^{-1}\cdot a_{q}$ for any $p,q\in I_{\mathcal A}$.
If $p\in \{1,2,...,n\}\backslash I_{\mathcal A}$ then we know that
${\rm rank}\,\Gamma_{p}={\rm rank}\, \left(\Gamma_{p},a_{p}\right)$ holds.
\end{proof}

Let us give some examples checking conditions of Theorem \ref{mavjudlik}.
\begin{example} Let $A$ be the algebra with the (cubic) matrix of structural constants
$$\left( \begin{array}{cccc}
1 & 2\\
0 & 1 \end{array}\Bigg|
\begin{array}{cccc}
0 & 2\\
1 & 1 \end{array}\right)$$
and the evolution algebra $E$ with matrix of structural constants
$$\left( \begin{array}{cccc}
1 & 1\\
-1 & -1 \end{array}\right).$$ For these algebras the system of equations (\ref{svyaz}) has the following form
\begin{equation}\label{misol1}\begin{cases}
2x_{1}=1\\
4x_{1}+3x_{2}=1\\
2x_{2}=-1\\
3x_{1}+2x_{2}=-1 \end{cases}.\end{equation}
In this case $\Gamma_{1}=\left(\begin{array}{cccc}
2 & 0\\
4 & 3 \end{array}\right)$ and
$\Gamma_{2}=\left(\begin{array}{cccc}
0 & 2\\
3 & 2 \end{array}\right)$ and $\det\Gamma_{1}\cdot\det\Gamma_{2}\neq 0$. Therefore
$\Gamma_{1}\cdot a_{1}\neq\Gamma_{2}\cdot a_{2}$, where $a_{1}=\left(\begin{array}{cc}1\\1\end{array}\right)$,
 $a_{2}=\left(\begin{array}{cc}-1\\-1\end{array}\right)$. Namely,  the conditon (\ref{umumiyshart1}) doesn't hold.
 Consequently, system of equations (\ref{misol1}) has no solution.
\end{example}

\begin{example} Let $A$ be the algebra with matrix of structural constants
$$\left( \begin{array}{cccc}
1 & 2\\
0 & 1 \end{array}\Bigg|
\begin{array}{cccc}
0 & -1\\
1 & 1 \end{array}\right)$$
and the evolution algebra $E$ with matrix of structural constants
$\left( \begin{array}{cccc}
1 & 1\\
-1 & -1 \end{array}\right)$. For these algebras system of equations (\ref{svyaz}) has the following form
\begin{equation}\label{misol2}\begin{cases}
2x_{1}=1\\
4x_{1}=1\\
2x_{2}=-1\\
2x_{2}=-1 \end{cases}.\end{equation}
In this case $\Gamma_{1}=\left(\begin{array}{cccc}
2 & 0\\
4 & 0 \end{array}\right)$,
$\Gamma_{2}=\left(\begin{array}{cccc}
0 & 2\\
0 & 2 \end{array}\right)$ and $\det\Gamma_{1}=\det\Gamma_{2}=0$. Then
${\rm rank}\,\Gamma_{1}\neq {\rm rank}\,(\Gamma_{1},a_{1})$ although
 ${\rm rank}\, \Gamma_{2}\neq {\rm rank}\, (\Gamma_{2},a_{2})$, where
 $a_{1}=\left(\begin{array}{cc}1\\1\end{array}\right)$,
 $a_{2}=\left(\begin{array}{cc}-1\\-1\end{array}\right)$. Namely  the conditon (\ref{umumiyshart2}) doesn't hold.
 Consequently, system of equations (\ref{misol2}) has no solution.
\end{example}
\begin{example}
Let $A$ be the algebra with matrix of structural constants
$\left( \begin{array}{cccc}
1 & 2\\
0 & 1 \end{array}\Bigg|
\begin{array}{cccc}
-1 & 0\\
1 & 2 \end{array}\right)$
and the evolution algebra $E$ with matrix of structural constants
$\left( \begin{array}{cccc}
1 & 5\\
1 & 5 \end{array}\right)$. For these algebras the system of equations (\ref{svyaz}) has the following form
\begin{equation}\label{misol3}\begin{cases}
2x_{1}-x_{2}=1\\
4x_{1}+x_{2}=5\\
-x_{1}+2x_{2}=1\\
x_{1}+4x_{2}=5. \end{cases}\end{equation}
In this case $\Gamma_{1}=\left(\begin{array}{cccc}
2 & -1\\
4 & 1 \end{array}\right)$,
$\Gamma_{2}=\left(\begin{array}{cccc}
-1 & 2\\
1 & 4 \end{array}\right)$ and $\det\Gamma_{1}\cdot\det\Gamma_{2}\neq0$.
 Then
 $$\Gamma_{1}^{-1}=\left(\begin{array}{cccc}
\frac{1}{6} & \frac{1}{6}\\[2mm]
-\frac{2}{3} & \frac{1}{3} \end{array}\right) \ \ \mbox{and} \ \
\Gamma_{2}^{-1}=\left(\begin{array}{cccc}
-\frac{2}{3} & \frac{1}{3}\\[2mm]
\frac{1}{6} & \frac{1}{6} \end{array}\right).$$
The condition (\ref{umumiyshart1}) is satisfied, i.e.,
$\Gamma_{1}^{-1}\cdot a_{1}=\Gamma_{2}^{-1}\cdot a_{2}$, where $a_{1}=\left(\begin{array}{cc}1\\5\end{array}\right)$,
 $a_{2}=\left(\begin{array}{cc}1\\5\end{array}\right)$.
 Consequently, system of equations (\ref{misol3}) has a solution $x_{1}=x_{2}=1$.
\end{example}

For an evolution algebra $E$ introduce the following sequences
$$E^{[1]}=E^{<1>}=E, \ \ E^{[k+1]}=E^{[k]}E^{[k]}, \ \ E^{<k+1>}=E^{<k>}E, \ \ k\geq 1.$$

\begin{equation} E^{k}=\sum_{i=1}^{k-1}E^{i}E^{k-i}.
\end{equation}

The following inclusions hold
$$E^{<k>}\subseteq E^{k},\,\,\,\,\,E^{[k+1]}\subseteq E^{2^k}.$$

Since $E$ is commutative algebra we obtain $$E^{k}=\sum_{i=1}^{\lfloor k/2\rfloor}E^{i}E^{k-i},$$
where $\lfloor x\rfloor$ denotes the integer part of $x$.

\begin{definition} An evolution algebra $E$ is called right nilpotent if there exists some $s\in \mathbb{N}$ such that
$E^{<s>}=0$. The smallest $s$ such that $E^{<s>}=0$ is called the index of right nilpotency.
\end{definition}

\begin{definition}
An evolution algebra $E$ is called nilpotent if there exists some $n\in \mathbb{N}$ such that $E^n=0$.
The smallest $n$ such that $E^n=0$ is called the index of nilpotency.
\end{definition}
In \cite{12}, it is proved that the notions of nilpotent and right nilpotent are equivalent.

\begin{definition}
An algebra $A$ is called solvable if there exits some $t\in \mathbb{N}$ such that $A^{[t]}=0$. The
smallest $t$ such that $A^{[t]}=0$ is called the index of solvability.
\end{definition}

In \cite{13} it is proved that the notions of nil and right nilpotency are equivalent for evolution algebras.
Moreover, the matrix of structural constants for such an algebra has upper (or lower, up to permutation
of basis elements of the algebra) triangular form, i.e. the following results are known:

\begin{theorem}\label{nilpotentlik} (\cite{13}) The following statements are equivalent for an n-dimensional evolution algebra $E$:\\
(a) The matrix corresponding to $E$ can be written as $$M_{E}=\left(\begin{array}{cccccccc}
0 & a_{12} & a_{13} & \dots & a_{1n}\\
0 & 0 & a_{23} & \dots & a_{2n}\\
0 & 0 & 0 & \dots & a_{3n}\\
\vdots & \vdots & \vdots & \cdots & \vdots\\
0 & 0 & 0 & \cdots & 0\\
\end{array}\right);$$\\
(b) $E$ is a right nilpotent algebra;\\
(c) $E$ is a nil algebra.
\end{theorem}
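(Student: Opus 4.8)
The plan is to prove the cycle of implications $(a)\Rightarrow(b)\Rightarrow(c)\Rightarrow(a)$, using throughout the product rule of an evolution algebra: if $u=\sum_i u_ie_i$ and $v=\sum_i v_ie_i$ in the natural basis, then $uv=\sum_i u_iv_i\,e_i^2$, so that right multiplication by $v$ only involves the coordinates on which $v$ is supported. For $(a)\Rightarrow(b)$ I would introduce the flag $V_k=\langle e_k,e_{k+1},\dots,e_n\rangle$. The strictly upper triangular form gives $e_i^2\in V_{i+1}$, and the product rule then yields $E\cdot V_k\subseteq V_{k+1}$; a short induction gives $E^{<k>}\subseteq V_k$, so $E^{<n+1>}\subseteq V_{n+1}=0$ and $E$ is right nilpotent. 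For $(b)\Rightarrow(c)$ I would simply note that the principal powers of any $x\in E$ satisfy $x^{<1>}=x\in E^{<1>}$ and $x^{<k+1>}=x^{<k>}x\in E^{<k>}E=E^{<k+1>}$; hence $E^{<s>}=0$ forces $x^{<s>}=0$ for every $x$, i.e.\ every element is nilpotent and $E$ is nil.

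The substance is $(c)\Rightarrow(a)$, which I would prove by contraposition. Encode $M_E=(a_{ik})$ in the directed graph $G$ on $\{1,\dots,n\}$ with an arc $i\to k$ exactly when $a_{ik}\neq0$. A permutation of the basis brings $M_E$ to strictly upper triangular form if and only if $G$ admits a topological ordering, that is, if and only if $G$ is acyclic. So suppose $(a)$ fails; then $G$ contains a directed cycle, and I would take a shortest one, $i_1\to i_2\to\cdots\to i_r\to i_1$. Minimality forces this cycle to be chordless, so the only arcs among its vertices are the cycle arcs themselves; writing $P$ for the projection onto $\langle e_{i_1},\dots,e_{i_r}\rangle$ and reading indices modulo $r$, this means $P(e_{i_j}^2)=a_{i_ji_{j+1}}e_{i_{j+1}}$.

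Finally I would test nilpotency on $x=\sum_{j=1}^r e_{i_j}$. Since $x$ is supported on the cycle, right multiplication by $x$ annihilates everything off the cycle, so the projected powers $y_k:=P(x^{<k>})$ obey the closed recursion $u^{(k+1)}_{j+1}=a_{i_ji_{j+1}}\,u^{(k)}_{j}$, where $y_k=\sum_j u^{(k)}_je_{i_j}$. This is a cyclic shift with all multipliers $a_{i_ji_{j+1}}\neq0$ and initial data $y_1=x\neq0$, so every $y_k$ is nonzero; hence $x^{<k>}\neq0$ for all $k$, $x$ is not nilpotent, and $E$ is not nil, which is the desired contrapositive. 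I expect the main obstacle to be exactly this last step: recognizing that passing to a chordless shortest cycle and multiplying on the right by a cycle-supported element closes the recursion onto the cycle, turning it into a non-degenerate cyclic shift. The remaining implications are routine bookkeeping with the product rule.
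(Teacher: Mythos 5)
Your proof is correct, but be aware that the paper contains no proof of this statement to compare against: Theorem~\ref{nilpotentlik} is quoted from \cite{13} as a known result, so your attempt can only be judged on its own merits and against the published argument, not against anything internal to this paper. On the merits, your cycle of implications is complete. The flag argument for $(a)\Rightarrow(b)$ and the observation $x^{<k+1>}=x^{<k>}x\in E^{<k>}E=E^{<k+1>}$ for $(b)\Rightarrow(c)$ are the routine steps; the substantive part, your contrapositive of $(c)\Rightarrow(a)$, also holds up: the translation of ``permutable to strictly upper triangular'' into acyclicity of the digraph of nonzero entries is right and correctly absorbs the diagonal (a nonzero $a_{ii}$ is a loop, i.e.\ a cycle of length one), a shortest cycle is indeed chordless, and the projected recursion $u^{(k+1)}_{j+1}=a_{i_ji_{j+1}}u^{(k)}_{j}$ with all multipliers nonzero and initial coordinates equal to $1$ forces $x^{<k>}\neq0$ for all $k$, so $E$ is not nil. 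Two points are worth making explicit if you write this up. First, ``nil'' must be read, as in \cite{13}, through right principal powers $x^{<k+1>}=x^{<k>}x$; your argument is in fact robust to the stronger reading (nilpotency of the subalgebra generated by $x$), since $x^{<m>}\in\langle x\rangle^{m}$. Second, your argument proves the sharper statement that for a nil evolution algebra the \emph{given} natural basis becomes strictly triangular after a mere permutation --- exactly the ``up to permutation of basis elements'' phrasing the paper uses before the theorem --- and this sharper form is what closes the cycle even if one reads $(a)$ as ``some natural basis works''. Compared with the treatment in \cite{13}, which reaches triangularity by direct algebraic analysis of the structure matrix, your graph-theoretic route (triangulability equals acyclicity; a shortest chordless cycle yields a non-nilpotent element via a nondegenerate cyclic shift) is a compact, self-contained alternative that isolates the combinatorial content of the theorem.
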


Take an evolution algebra $E$ as algebra $\mathcal A$, and by (\ref{svyaz}) define $E_x$, then
the following question naturally arises: What about right nilpotency
of the algebra $E_{x}$ if the algebra $E$ is right nilpotent?

\begin{theorem}\label{nilevolution}
Let $E$ be any right nilpotent $n$-dimensional real (complex) evolution algebra,
then for any $x\in E$ the evolution algebra $E_{x}$ is a right nilpotent.
\end{theorem}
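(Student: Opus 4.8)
The plan is to reduce the statement to the structure theorem, Theorem~\ref{nilpotentlik}, by computing the matrix of $E_x$ explicitly and showing that it inherits the strictly upper triangular shape of $M_E$. Since $E$ is right nilpotent, by Theorem~\ref{nilpotentlik} I may fix a natural basis $e_1,\dots,e_n$ of $E$ in which the matrix of structural constants $M_E=(a_{pk})$ is strictly upper triangular, i.e. $a_{pk}=0$ whenever $p\geq k$. Viewing $E$ as a general algebra $\mathcal A$ in this same basis, its cubic structural constants are $\gamma_{ij,k}=a_{ik}$ when $i=j$ and $\gamma_{ij,k}=0$ when $i\neq j$; this is merely a restatement of $e_ie_j=0$ for $i\neq j$ and $e_i^2=\sum_k a_{ik}e_k$.

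The key step is to compute the coefficients $\beta_{pk}(x)$ from (\ref{svyaz}). Because $\gamma_{pi,k}$ is nonzero only for $i=p$ and $\gamma_{ip,k}$ is nonzero only for $i=p$, both sums in (\ref{svyaz}) collapse to the single index $i=p$, so that
\[
\beta_{pk}(x)=2\,a_{pk}\,x_p .
\]
Hence the matrix $\mathcal M_{E_x}=(\beta_{pk}(x))$ is exactly $M_E$ with its $p$-th row multiplied by the scalar $2x_p$, for each $p$. Scaling the rows of a matrix never creates nonzero entries where there were zeros, so $\mathcal M_{E_x}$ is again strictly upper triangular: its diagonal entries are $2a_{pp}x_p=0$ and its entries below the diagonal are $2a_{pk}x_p=0$ for $p>k$. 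Applying the implication $(a)\Rightarrow(b)$ of Theorem~\ref{nilpotentlik} to $E_x$ then yields right nilpotency of $E_x$, for every $x\in E$.

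I do not expect a serious obstacle; the whole argument rests on the collapse of the two sums in (\ref{svyaz}) to a single term, which is precisely what makes $\mathcal M_{E_x}$ a mere row-rescaling of $M_E$. The only point requiring a line of care is the freedom, guaranteed by Theorem~\ref{nilpotentlik}, to work in a basis where $M_E$ is triangular (the theorem permits an initial permutation of the basis); once that basis is fixed and $x$ is expressed in it, the row-scaling observation is immediate and basis-consistent, and the conclusion follows without any estimate on the index of nilpotency.
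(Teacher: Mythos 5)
Your proof is correct and follows essentially the same route as the paper: reduce to Theorem~\ref{nilpotentlik}, observe that for an evolution algebra the two sums in (\ref{svyaz}) collapse to $\beta_{pk}(x)=2a_{pk}x_p$, so the matrix of $E_x$ inherits the strictly triangular shape of $M_E$, and conclude right nilpotency. The only difference is notational: the paper records the Jacobian with the transposed index convention, so it obtains a strictly \emph{lower}-triangular matrix and inserts an extra change of basis $e_i=\tilde e_{n-(i-1)}$ to reverse the order of the basis, a step that your row-rescaling convention renders unnecessary.
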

\begin{proof}
Since $E$ is right nilpotent $n$-dimensional real (complex) evolution algebra,
then its matrix of structural constants (by Theorem \ref{nilpotentlik}) is
$$M_{E}=\left(\begin{array}{cccccccc}
0 & a_{12} & a_{13} & \dots & a_{1n}\\
0 & 0 & a_{23} & \dots & a_{2n}\\
0 & 0 & 0 & \dots & a_{3n}\\
\vdots & \vdots & \vdots & \cdots & \vdots\\
0 & 0 & 0 & \cdots & 0\\
\end{array}\right).$$
Consequently the matrix of structural constants of the evolution algebra $E_{x}$ has a form
$$M_{E_{x}}=\left(\begin{array}{cccccccc}
0 & 0 & 0& \dots & 0\\
2a_{12}x_{1} & 0 & 0& \dots & 0\\
2a_{13}x_{1} & 2a_{23}x_{2} & 0 & \dots & 0\\
\vdots & \vdots & \vdots & \cdots & \vdots\\
2a_{1n}x_{1} & 2a_{2n}x_{2} & 2a_{3n}x_{3} & \cdots & 0\\
\end{array}\right).$$
Now we give change of basis as the following
$$e_{i}=\tilde e_{n-(i-1)}, \ \ i=\overline{1,n}.$$
Then this evolution algebra is
isomorphic to the evolution algebra which matrix of structural constants
in a view upper-triangular form:
$$M_{E_{x}}=\left(\begin{array}{cccccccc}
0 & 2a_{n-1,n}x_{n-1} & \cdots & 2a_{3n}x_{3} & 2a_{2n}x_{2} & 2a_{1n}x_{1}\\
\vdots & \vdots & \vdots & \cdots & \vdots & \vdots\\
0 & 0 & \cdots & 0 & 2a_{23}x_{2} & 2a_{13}x_{1}\\
0 & 0 & \cdots & 0 & 0 & 2a_{12}x_{1}\\
0 & 0 & \cdots & 0 & 0 & 0\\
\end{array}\right).$$
This means that the evolution algebra $E_{x}$ is right nilpotent (see Theorem \ref{nilpotentlik}).
\end{proof}

The following example shows that the opposite of the theorem is not always appropriate.

\begin{example} Let  $\left(\begin{array}{cccc}
a & b\\
c & 0
\end{array}\right)$ be a matrix of structural constants of the 2-dimensional evolution algebra $E$.
Then matrix of structural constants of the evolution algebra $E_{x}$ is:
$M_{E_{x}}=\left(\begin{array}{cccc}
2ax_{1} & 2cx_{2}\\
2bx_{1} & 0
\end{array}\right).$
If we take $x_{1}=0$, then  $M_{E_{x}}=\left(\begin{array}{cccc}
0 & 2cx_{2}\\
0 & 0
\end{array}\right)$ is in a view upper-triangular form, that means
the evolution algebra $E_{x}$ is nilpotent. But
the evolution algebra $E$ is non-nilpotent if $bc\neq0$.
\end{example}
\begin{remark} Let two real evolution algebras $E$ and $E'$ are isomorphic. Then corresponding evolution algebras
$E_{x}$ and $E'_{y}$ may be non-isomorphic for any nonzero $x$ and $y$, in general.
\end{remark}
\begin{example}
 Let $E$ and $E'$ two real evolution algebras with matrix of structural constants
 $\left(\begin{array}{cccc}
 1 & 2\\
 3 & 4 \end{array}\right)$ and
 $\left(\begin{array}{cccc}
 a & \frac{3a^2}{16d}\\
 \frac{8d^2}{a} & {d}\end{array}\right)$ respectively, where $ad\neq0$.
 These algebras are isomorphic and isomorphism is $\left(\begin{array}{cccc}
 0 & \frac{1}{d}\\
 \frac{4}{a} & 0 \end{array}\right)$. Thus for evolution algebras $E$ and $E'$ we denote by $E_{x}$ and $E_{y}$
 evolution algebras with matrixes of structural constants $\left(\begin{array}{cccc}
 x_{1} & 3x_{2}\\
 2x_{1} & 4x_{2} \end{array}\right)$ and $\left(\begin{array}{cccc}
 ay_{1} & \frac{8d^2}{a}y_{2}\\
 \frac{3a^2}{16d}y_{1} & dy_{2} \end{array}\right)$, respectively. These evolution algebras constructed by (\ref{svyaz})
 for algebras $E$ and $E'$, respectively.
We assume that evolution algebras $E_{x}$ and $E_{y}$ are isomorphic.
Then there exists an isomorphism $\varphi=\left(\begin{array}{cccc}
 \alpha & \beta\\
 \gamma & \delta \end{array}\right)$ such that ${\rm det}\varphi\neq0$.

 Let ${e_{1},e_{2}}$ and ${f_{1},f_{2}}$ be bases of evolution algebras $E_{x}$ and $E_{y}$, respectively.
 Since $x,y$ are non-zero, $\varphi(e_{1}e_{2})=0$ implies that $\alpha\gamma=\beta\delta=0$.

Since ${\rm det}\varphi\neq0$ we don't consider the cases $\alpha=\beta=0$ and
$\gamma=\delta=0$. So we have only the following cases:\\
\textbf{Case $\alpha=\delta=0$}. In this case $\varphi(e_{1})=\beta f_{2}$ and $\varphi(e_{2})=\gamma f_{1}$.
From $\varphi(e_{1}e_{1})=\varphi(e_{1})\varphi(e_{1})$ and $\varphi(e_{2}e_{2})=\varphi(e_{2})\varphi(e_{2})$ we get
\begin{equation}\label{example5}\begin{array}{llll}
3\gamma x_{2}=\beta^2y_{1}\frac{3a^2}{16d}\\[2mm]
x_{1}\beta=\beta^2y_{2}d\\[2mm]
4\gamma x_{2}=\gamma^2y_{1}a\\[2mm]
2\beta x_{1}=\gamma^2y_{2}\frac{8d^2}{a}.
\end{array}\end{equation}
From first and third relations of (\ref{example5}) we get $\beta^2a=4\gamma^2d$.
Similarly, from second and fourth relations of (\ref{example5}) we get also $\beta^2a=4\gamma^2d$.
But this equation doesn't have non-zero real solutions $\beta$ and $\gamma$ if $ad<0$.\\
\textbf{Case $\beta=\gamma=0$}. Similarly to the above case we get the equation $3\delta^2a=32\alpha^2d$.
This equation also doesn't have non-zero real solutions if $ad<0$.

Thus an isomorphism $\varphi$
doesn't exist and so evolution algebras $E_{x}$ and $E_{y}$ are not isomorphic for any non-zero $x$ and $y$.
\end{example}
In the following section we give another approximation of algebras, and prove that for two isomorphic evolution
algebras there are their approximated algebras which are isomorphic two.

\section{Transposed evolution algebras}

In this section we consider evolution algebras corresponding to a given algebra as follows:
matrix of structural constants of the evolution algebra is
transposed matrix of $J_{F}(x)$ which defined in section \ref{sec:two}, i.e.
matrix of considering evolution algebra is $(\beta_{pk}(x))_{p,k=1}^n$, where
\begin{equation}\label{transposedstructure}
\beta_{pk}(x)=\sum_{i}(\gamma_{ki,p}+\gamma_{ik,p})x_{i}.
\end{equation}
Formed evolution algebra we call an \textit{approximation of given algebra $A$ by transposed}
and denote by $E_{A_{x}}^T$.
\begin{remark} Approximation of a Lie algebra\footnote{See https://en.wikipedia.org/wiki/Lie$_-$algebra} by (\ref{svyaz}) (and by (\ref{transposedstructure})) is an abelian evolution algebra.
\end{remark}
\begin{proposition}
Let $A$ and $B$ be $n$-dimensional isomorphic evolution algebras. Then there exist $x\in A$ and $y\in B$
such that $E_{A_{x}}^T\simeq E_{B_{y}}^T$.
\end{proposition}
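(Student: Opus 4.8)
The plan is to make the construction $x\mapsto E_{A_x}^{T}$ functorial with respect to the isomorphism $A\simeq B$, and then to repair the gap between linear conjugacy of structure matrices and genuine evolution‑algebra isomorphism by a careful rescaling of the natural basis. First I would record the explicit shape of the transposed approximation for an evolution algebra. Since $A$ is an evolution algebra with matrix $M_{A}=(a_{ij})$, its cubic constants are $\gamma_{ii,k}=a_{ik}$ and $\gamma_{ij,k}=0$ for $i\neq j$; substituting into (\ref{transposedstructure}) gives $\beta_{pk}(x)=2a_{kp}x_{k}$, so the structure matrix of $E_{A_x}^{T}$ is $2M_{A}^{\mathsf T}D_{x}$ with $D_{x}=\mathrm{diag}(x_{1},\dots,x_{n})$. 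Equivalently $E_{A_x}^{T}$ is the evolution algebra attached to the operator $T_{x}=L_{x}+R_{x}$, $v\mapsto xv+vx$; this is the observation that powers the section, because for an algebra isomorphism $\varphi\colon A\to B$ one has the equivariance $T^{B}_{\varphi(x)}=\varphi\,T^{A}_{x}\,\varphi^{-1}$, whence the structure matrices of $E_{A_x}^{T}$ and $E_{B_{\varphi(x)}}^{T}$ are conjugate.

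Next I would pin down the form of $\varphi$. Preserving the relations $e_{i}e_{j}=0$ ($i\neq j$) together with invertibility forces $\varphi$ to be a generalized permutation, $\varphi(e_{i})=\lambda_{i}f_{\sigma(i)}$ with $\lambda_{i}\neq 0$ and $\sigma\in S_{n}$ (exactly the mechanism already seen in the example of Section~\ref{sec:two}, where $\varphi(e_{1}e_{2})=0$ forced $\alpha\gamma=\beta\delta=0$). Matching $\varphi(e_{i}^{2})=\varphi(e_{i})^{2}$ then yields the key relation $b_{\sigma(i),\sigma(k)}=\frac{\lambda_{k}}{\lambda_{i}^{2}}\,a_{ik}$ between the two structure matrices.

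Then I would set $y=\varphi(x)$, so $y_{\sigma(k)}=\lambda_{k}x_{k}$, and seek an isomorphism of the approximations of the form $\psi(\hat e_{i})=\mu_{i}\hat f_{\sigma(i)}$. Imposing $\psi(\hat e_{p}^{2})=\psi(\hat e_{p})^{2}$ and using the relation above reduces, on the support $\{(p,k):a_{kp}x_{k}\neq 0\}$ of $E_{A_x}^{T}$, to the single scaling condition $\lambda_{p}\mu_{p}^{2}=\lambda_{k}\mu_{k}$. Here I would exploit the freedom in $x$: choosing $x=e_{t}$ with $e_{t}^{2}\neq 0$ collapses the support to $k=t$, so the conditions become $\mu_{p}^{2}=\lambda_{t}\mu_{t}/\lambda_{p}$ for the finitely many $p$ with $a_{tp}\neq 0$, which I solve by putting $\mu_{t}=1$ and extracting square roots (all $\mu_{i}\neq 0$). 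This $\psi$ is then the required isomorphism $E_{A_x}^{T}\simeq E_{B_y}^{T}$.

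The main obstacle is precisely the gap between similarity of structure matrices and isomorphism of evolution algebras: because $\hat e_{p}^{2}$ scales quadratically under $\psi$ while $\hat e_{k}$ scales linearly, the naive conjugation by $\varphi$ does not descend to an evolution‑algebra map, leaving the nonlinear scaling system $\lambda_{p}\mu_{p}^{2}=\lambda_{k}\mu_{k}$. The purpose of choosing $x$ with small support is to make this system consistent (over $\mathbb{C}$ it is solvable whenever the support avoids incompatible multiplicative cycles; over $\mathbb{R}$ one additionally picks $t$ and the sign of $\mu_{t}$ so that the ratios $\lambda_{t}/\lambda_{p}$ are positive). A secondary point requiring care is the justification of the generalized‑permutation form of $\varphi$ when $A$ is degenerate.
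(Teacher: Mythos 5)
Your proof rests on the claim that an isomorphism $\varphi\colon A\to B$ of evolution algebras is necessarily a generalized permutation $\varphi(e_i)=\lambda_i f_{\sigma(i)}$, and that is exactly where it breaks: the claim is false, and the failure is not a ``secondary point'' confined to patchable degenerate cases — it removes the key relation on which every later step depends. Take $A$ with $e_1^2=e_1$, $e_2^2=0$ and $B$ with $f_1^2=f_1+f_2$, $f_2^2=0$. Then $\varphi(e_1)=f_1+f_2$, $\varphi(e_2)=f_2$ is an isomorphism, so $A\simeq B$; yet no monomial isomorphism between them exists, since your relation $b_{\sigma(i),\sigma(k)}=\frac{\lambda_k}{\lambda_i^2}a_{ik}$ would force $M_A$ and $M_B$ to have the same number of nonzero entries, and they have one and two respectively. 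For such pairs there is no admissible $(\sigma,\lambda)$ at all, hence no $y=\varphi(x)$ of the required form and no scaling system to solve. A second, independent gap is the real square-root step. For $A\colon e_1^2=e_1+e_2,\ e_2^2=0$ and $B\colon f_1^2=f_1-f_2,\ f_2^2=0$, the unique monomial isomorphism is $\sigma=\mathrm{id}$, $\lambda_1=1$, $\lambda_2=-1$; the only possible choice is $t=1$, the condition at $p=t$ forces $\mu_t=1$ (no sign freedom, because $a_{11}\neq0$), and at $p=2$ you need $\mu_2^2=-1$. This is not an artifact of restricting $\psi$ to monomial form: with your own formula $\beta_{pk}(x)=2a_{kp}x_k$, the algebras $E^T_{A_{e_1}}$ (where $\hat e_1^2=2\hat e_1$, $\hat e_2^2=2\hat e_1$) and $E^T_{B_{f_1}}$ (where $\hat f_1^2=2\hat f_1$, $\hat f_2^2=-2\hat f_1$) are not isomorphic over $\mathbb{R}$, since the latter contains a nonzero element of square zero and the former does not; so your prescription $x=e_t$, $y=\varphi(e_t)$ selects genuinely wrong points here, and no isomorphism $\psi$ of any shape can exist.

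The paper's proof needs none of this analysis of $\varphi$: take $x=(c,\dots,c)$ and $y=(d,\dots,d)$ with $c,d$ nonzero scalars. With the matrices as displayed in the paper's proof (row $p$ of $M_A$ multiplied by $2x_p$), the structure matrix of $E^T_{A_x}$ is $2cM_A$, so the uniform rescaling $\tilde e_i\mapsto 2c\,e_i$ gives $E^T_{A_x}\simeq A$, and likewise $E^T_{B_y}\simeq B$; therefore $E^T_{A_x}\simeq A\simeq B\simeq E^T_{B_y}$. (The statement is purely existential; even $x=y=0$ would do, both approximations then being abelian.) You did notice something real in passing: your matrix $2M_A^TD_x$ is the literal reading of (\ref{transposedstructure}) and is the transpose of the matrix the paper writes in its proof; under that literal reading the constant-vector choice yields $2cM_A^T$, and an evolution algebra with matrix $M^T$ need not be isomorphic to the one with matrix $M$, so the two conventions genuinely differ. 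But that observation does not repair your argument: under either convention, the monomial-isomorphism step and the square-root step fail for the pairs above.
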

\begin{proof}
Let matrixes of structural constants of evolution algebras $A$ and $B$ has the following forms:
$\left(\begin{array}{ccccc}
a_{11} & a_{12} & \dots & a_{1n}\\
a_{21} & a_{22} & \dots & a_{2n}\\
\vdots & \vdots & \vdots & \vdots\\
a_{n1} & a_{n2} & \dots & a_{nn}\end{array}\right)$ and
$\left(\begin{array}{ccccc}
b_{11} & b_{12} & \dots & b_{1n}\\
b_{21} & b_{22} & \dots & b_{2n}\\
\vdots & \vdots & \vdots & \vdots\\
b_{n1} & b_{n2} & \dots & b_{nn}\end{array}\right)$ respectively.
Then matrixes of structural constants of  corresponding evolution
 algebras $E_{A_{x}}^T$ and $E_{B_{y}}^T$ are
$$J_{F_{A}}(x)=\left(\begin{array}{ccccc}
2a_{11}x_{1} & 2a_{12}x_{1} & \dots & 2a_{1n}x_{1}\\
2a_{21}x_{2} & 2a_{22}x_{2} & \dots & 2a_{2n}x_{2}\\
\vdots & \vdots & \vdots & \vdots\\
2a_{n1}x_{n} & 2a_{n2}x_{n} & \dots & 2a_{nn}x_{n}\end{array}\right),$$
$$J_{F_{B}}(y)=\left(\begin{array}{ccccc}
2b_{11}y_{1} & 2b_{12}y_{1} & \dots & 2b_{1n}y_{1}\\
2b_{21}y_{2} & 2b_{22}y_{2} & \dots & 2b_{2n}y_{2}\\
\vdots & \vdots & \vdots & \vdots\\
2b_{n1}y_{n} & 2b_{n2}y_{n} & \dots & 2b_{nn}y_{n}\end{array}\right)$$ respectively.
If we take $x_{i}=c$ and $y_{i}=d$, $i=1,2,\dots, n$ then we get evolution algebras which
isomorphic to given evolution algebras $A$ and $B$ respectively.
\end{proof}

\section{Approximations of 2 and 3-dimensional Leibniz algebras}

Let us introduce some definitions and notations, all of them necessary for the understanding of this
section.
\begin{definition} A Leibniz algebra over $K$ is a vector space $\mathcal{L}$ equipped with a bilinear map, called
bracket,
\begin{quote}
$[-,-]:\mathcal{L}\times\mathcal{L}\rightarrow\mathcal{L}$
\end{quote}
satisfying the Leibniz identity:
\begin{quote}
$[x,[y,z]]=[[x,y],z]-[[x,z],y]$,
\end{quote}
for all $x,y,z\in\mathcal{L}$.
\end{definition}

For a given Leibniz algebra $(\mathcal{L},[-,-])$ we define lower central series as follows:
$$\mathcal{L}^1=\mathcal{L}, \mathcal{L}^{k+1}=[\mathcal{L}^{k},\mathcal{L}], k\geq1.$$
\begin{definition}
A Leibniz algebra $\mathcal{L}$ said to be nilpotent, if there exists $n\in\mathbb{N}$ such that $\mathcal{L}^n=0$.
The minimal number $n$ such property is said to be the index nilpotency of the algebra $L$.
\end{definition}

The classification of the two and three dimensional nilpotent
Leibniz algebras was obtained in \cite{14} and \cite{15}, respectively. The following theorems show these
classifications.

\begin{theorem}\label{Leibniznil2}
Let $\mathcal{L}$ be a 2-dimensional nilpotent Leibniz algebra. Then $\mathcal{L}$ is an abelian algebra or it is isomorphic to
\begin{quote} $\mu_{1}:[e_{1},e_{1}]=e_{2}$.
\end{quote}
\end{theorem}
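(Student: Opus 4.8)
The plan is to classify by means of the lower central series, reducing everything to the dimension of the derived subspace $\mathcal{L}^2=[\mathcal{L},\mathcal{L}]$. First I would note that nilpotency forces the descending chain $\mathcal{L}\supseteq\mathcal{L}^2\supseteq\mathcal{L}^3\supseteq\cdots$ to decrease strictly until it reaches $0$; since $\dim\mathcal{L}=2$, this leaves only two possibilities, namely $\mathcal{L}^2=0$ or $\dim\mathcal{L}^2=1$. The first case is immediate: if $\mathcal{L}^2=0$ then every bracket vanishes and $\mathcal{L}$ is abelian.

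The substantive case is $\dim\mathcal{L}^2=1$. Here I would choose a basis $\{e_1,e_2\}$ adapted to the filtration, taking $e_2$ to span $\mathcal{L}^2$ and completing to a basis with some $e_1$. Since every product lies in $\mathcal{L}^2=\langle e_2\rangle$, the four structure brackets must take the form $[e_1,e_1]=\alpha e_2$, $[e_1,e_2]=\beta e_2$, $[e_2,e_1]=\gamma e_2$, $[e_2,e_2]=\delta e_2$. Nilpotency then enters through $\mathcal{L}^3$: because $\mathcal{L}^3=[\mathcal{L}^2,\mathcal{L}]\subseteq\mathcal{L}^2$ is at most one-dimensional, it equals either $0$ or all of $\mathcal{L}^2$, and the latter would stall the chain; hence $\mathcal{L}^3=0$, which forces $[e_2,e_1]=[e_2,e_2]=0$, i.e. $\gamma=\delta=0$.

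Next I would feed the surviving data into the Leibniz identity $[x,[y,z]]=[[x,y],z]-[[x,z],y]$. Evaluating at $x=y=e_1$, $z=e_2$ gives on the left $[e_1,\beta e_2]=\beta^2 e_2$ and on the right $[\alpha e_2,e_2]-[\beta e_2,e_1]=\alpha\delta e_2-\beta\gamma e_2=0$ once $\gamma=\delta=0$ are in force, so $\beta^2=0$ and hence $\beta=0$. At this stage the only surviving product is $[e_1,e_1]=\alpha e_2$, with $\alpha\neq0$ because $\dim\mathcal{L}^2=1$. Rescaling the basis vector $e_2$ to $\alpha e_2$ normalizes the constant to $1$ and produces exactly the multiplication $\mu_1:[e_1,e_1]=e_2$, so $\mathcal{L}\simeq\mu_1$.

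I expect the main obstacle to be the bookkeeping of which instance of the Leibniz identity yields a nontrivial constraint: most substitutions collapse to $0=0$ once $\gamma=\delta=0$, so the argument hinges on selecting the single evaluation ($x=y=e_1$, $z=e_2$) that isolates $\beta$. I would close by checking that $\mu_1$ is genuinely a Leibniz algebra (the identity holds trivially because $[-,e_2]=0$ and the only nonzero inner bracket is $[e_1,e_1]$), thereby confirming that the two cases are exhaustive and mutually exclusive.
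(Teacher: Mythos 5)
Your proof is correct, but note that the paper itself contains no proof of this statement: Theorem~\ref{Leibniznil2} is quoted as a known classification, attributed to Loday \cite{14} (with the 3-dimensional case from \cite{15}), so there is nothing internal to compare against. What your argument buys is a short, self-contained derivation. The structure is sound: nilpotency forces the lower central series to decrease strictly, so in dimension 2 either $\mathcal{L}^2=0$ (abelian) or $\dim\mathcal{L}^2=1$ with $\mathcal{L}^3=0$; choosing $e_2$ to span $\mathcal{L}^2$ gives $[e_1,e_1]=\alpha e_2$, $[e_1,e_2]=\beta e_2$, $[e_2,e_1]=\gamma e_2$, $[e_2,e_2]=\delta e_2$, and $\mathcal{L}^3=0$ kills $\gamma$ and $\delta$. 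You correctly observe the one genuinely delicate point: since the paper's lower central series is $\mathcal{L}^{k+1}=[\mathcal{L}^k,\mathcal{L}]$ (the ideal sits on the \emph{left} of the bracket), nilpotency alone says nothing about $\beta=[e_1,e_2]$-coefficient, and the Leibniz identity at $x=y=e_1$, $z=e_2$ is exactly what yields $\beta^2=0$, hence $\beta=0$; your computation of both sides is right. The normalization $e_2\mapsto\alpha e_2$ (valid over any field, no square roots needed) and the closing verification that $\mu_1$ satisfies the Leibniz identity (both $[\,\cdot\,,e_2]=0$ and $[e_2,\cdot\,]=0$, so both sides of the identity vanish identically) complete a clean and correct proof of the cited classification.
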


\begin{theorem}\label{Leibniznil3}
Let $\mathcal{L}$ be a 3-dimensional nilpotent Leibniz algebra. Then $\mathcal{L}$ is isomorphic to one of the
following pairwise non-isomorphic algebras:
\begin{quote} $\lambda_{1}:$ abelian,\\
$\lambda_{2}: [e_{1},e_{1}]=e_{3}$,\\
$\lambda_{3}: [e_{1},e_{2}]=e_{3}$, $[e_{2},e_{1}]=-e_{3}$,\\
$\lambda_{4}(\alpha): [e_{1},e_{1}]=e_{3}$,$[e_{2},e_{2}]=\alpha e_{3}$, $[e_{1},e_{2}]=e_{3}$\\
$\lambda_{5}: [e_{2},e_{1}]=e_{3}$, $[e_{1},e_{2}]=e_{3}$,\\
$\lambda_{6}: [e_{1},e_{1}]=e_{2}$, $[e_{2},e_{1}]=e_{3}$.
\end{quote}
\end{theorem}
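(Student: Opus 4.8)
The statement is a classification, so the plan is to organize the argument around the single discrete invariant $\dim \mathcal L^2$, where $\mathcal L^2=[\mathcal L,\mathcal L]$, working over $\mathbb C$. Since $\mathcal L$ is nilpotent, $\mathcal L^2\subsetneq\mathcal L$, hence $\dim\mathcal L^2\in\{0,1,2\}$, and I would treat the three values separately. The case $\dim\mathcal L^2=0$ is immediate: the bracket vanishes identically, so $\mathcal L$ is abelian, which is $\lambda_1$.

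For $\dim\mathcal L^2=1$, the first step is a reduction lemma. Fix a generator $e_3$ of $\mathcal L^2$. Nilpotency forces $\mathcal L^3=[\mathcal L^2,\mathcal L]=0$, so $[e_3,e_j]=0$ for all $j$; and writing $e_3=[u,v]$ and applying the Leibniz identity $[e_i,[u,v]]=[[e_i,u],v]-[[e_i,v],u]$ gives $[e_i,e_3]=0$ as well, because $[e_i,u],[e_i,v]\in\mathcal L^2$ and $[\mathcal L^2,\mathcal L]=0$. Thus the bracket descends to a bilinear map on $V:=\mathcal L/\mathcal L^2\cong\mathbb C^2$ valued in $\langle e_3\rangle\cong\mathbb C$, encoded by a nonzero $2\times2$ matrix $B$; moreover the Leibniz identity is then automatic, so every such $B$ yields a valid algebra. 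Since an isomorphism preserves $\mathcal L^2$, it acts by $B\mapsto c\,P^{\top}BP$ with $P\in GL_2(\mathbb C)$ and $c\in\mathbb C^{\ast}$ (the scaling of $e_3$), and the problem reduces to classifying nonzero $B$ under this action.

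The main work, and the step I expect to be the chief obstacle, is this bilinear-form classification. I would split $B=S+A$ into its symmetric and skew parts; both transform by $c\,P^{\top}(\cdot)P$, and since $A$ is $2\times2$ skew one has $P^{\top}AP=(\det P)A$, so the condition $A=0$ is an isomorphism invariant. When $A=0$ the form is symmetric and is classified over $\mathbb C$ by rank: rank $1$ gives $\lambda_2$ and rank $2$ gives $\lambda_5$. When $A\neq0$ I normalize $A$ to the standard skew form; if furthermore $S=0$ this is the purely skew (Lie) case $\lambda_3$, while if $S\neq0$ a short computation shows that, writing $A=aJ$ with $J=\bigl(\begin{smallmatrix}0&1\\-1&0\end{smallmatrix}\bigr)$, the scalar $\det S/a^2$ is invariant under $B\mapsto c\,P^{\top}BP$ and is a complete invariant on this stratum. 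Matching it to $\lambda_4(\alpha)$ via $\det S/a^2=4\alpha-1$ produces exactly the $\lambda_4(\alpha)$ branch, with distinct $\alpha$ non-isomorphic.

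For $\dim\mathcal L^2=2$ one has $\dim\mathcal L/\mathcal L^2=1$, so $\mathcal L$ is generated by one element $e_1$; setting $e_2=[e_1,e_1]$ and $e_3=[e_2,e_1]$ and using the Leibniz identity (e.g.\ $[e_1,[e_1,e_1]]=0$ forces $[e_1,e_2]=0$) shows the structure constants are rigidly determined, giving the null-filiform algebra $\lambda_6$. Finally I would assemble the non-isomorphism claims from the invariants already used: $\dim\mathcal L^2$ separates $\{\lambda_1\}$, $\{\lambda_2,\lambda_3,\lambda_4(\alpha),\lambda_5\}$ and $\{\lambda_6\}$; within the middle group the vanishing of $A$ and of $S$ together with the rank of $B$ separate $\lambda_2,\lambda_3,\lambda_5$ from the $\lambda_4$-branch, and $\det S/a^2$ separates the members of $\lambda_4(\alpha)$. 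The delicate points are the reduction lemma in the case $\dim\mathcal L^2=1$ (that all brackets involving $e_3$ vanish) and verifying that $\det S/a^2$ is genuinely complete, i.e.\ that every $S\neq0$ with $A\neq0$ is equivalent to a $\lambda_4(\alpha)$ representative and that distinct $\alpha$ never coincide.
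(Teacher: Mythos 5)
The paper itself contains no proof of this theorem: it is imported verbatim as a known classification from \cite{15} (Albeverio--Ayupov--Omirov), so there is no in-paper argument to compare yours against, and your proposal must stand on its own. Judged that way, it is correct. The stratification by $\dim\mathcal{L}^2\in\{0,1,2\}$ is the right organizing principle; your reduction lemma for $\dim\mathcal{L}^2=1$ is valid (nilpotency forces $\mathcal{L}^3=0$ since $\mathcal{L}^3$ is either $0$ or $\mathcal{L}^2$, and the Leibniz identity then kills $[\mathcal{L},\mathcal{L}^2]$ as you argue), so the bracket descends to a nonzero bilinear form $B$ on $\mathcal{L}/\mathcal{L}^2$ with the Leibniz identity automatic, and the isomorphism action $B\mapsto c\,P^{\top}BP$ is the correct one. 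The step you flagged as delicate --- completeness of $\det S/a^2$ on the stratum $S\neq0$, $A\neq0$ --- does go through: after using $c$ to normalize $a$, the residual action is exactly $SL_2(\mathbb{C})$-congruence (write $P=\lambda Q$ with $\lambda^2=\det P$, $Q\in SL_2$, so that $P^{\top}SP/\det P=Q^{\top}SQ$), and two nonzero complex symmetric $2\times2$ matrices with equal determinant are $SL_2(\mathbb{C})$-congruent: for $\det\neq0$ write $S=R^{\top}R$ and fix the sign of $\det R$ by an orthogonal reflection such as $\mathrm{diag}(1,-1)$; for rank $1$ write $S=vv^{\top}$ and use transitivity of $SL_2$ on nonzero vectors. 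This gives $\lambda_4(\alpha)$ with $4\alpha-1=\det S/a^2$ as a complete, non-redundant list of representatives, matching your computation ($S_\alpha=\bigl(\begin{smallmatrix}1&1/2\\1/2&\alpha\end{smallmatrix}\bigr)$, $a=1/2$). Two points you should make explicit in a final write-up: first, the list is the \emph{complex} classification --- over $\mathbb{R}$ both the rank-$2$ symmetric stratum and the mixed stratum split further --- so your standing assumption $K=\mathbb{C}$ is not optional; second, in the case $\dim\mathcal{L}^2=2$, the claim that $\mathcal{L}$ is generated by the single element $e_1$ requires the Nakayama-type fact that lifts of generators of $\mathcal{L}/\mathcal{L}^2$ generate a nilpotent Leibniz algebra (which rests on $[\mathcal{L}^i,\mathcal{L}^j]\subseteq\mathcal{L}^{i+j}$); once that is in place, your Leibniz-identity computations ($[e_1,e_2]=0$, $[e_2,e_2]=0$, $[e_3,\cdot]=[\cdot,e_3]=0$) do rigidly force $\lambda_6$.
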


\begin{proposition}\label{prop2-dimLeib}
Approximation by transposed of the 2-dimensional nilpotent Leibniz algebra in any point $x\in\mathcal{L}$ is also nilpotent evolution algebra.
\end{proposition}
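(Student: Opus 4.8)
The plan is to reduce to the classification of $2$-dimensional nilpotent Leibniz algebras provided by Theorem~\ref{Leibniznil2}, which tells us that up to isomorphism $\mathcal{L}$ is either abelian or the algebra $\mu_1$ with $[e_1,e_1]=e_2$. For each representative I would fix the natural basis, read off the structure constants $\gamma_{ij,k}$, compute the matrix $(\beta_{pk}(x))$ of the transposed approximation from (\ref{transposedstructure}), and then verify nilpotency through the triangularity criterion of Theorem~\ref{nilpotentlik}. The abelian case is immediate: every $\gamma_{ij,k}$ vanishes, so $\beta_{pk}(x)=0$ for all $p,k$ and all $x$, the approximation $E^{T}_{\mathcal L_x}$ carries the zero multiplication, and is therefore (trivially) nilpotent. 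This also matches the Remark that the transposed approximation of a Lie, in particular abelian, algebra is an abelian evolution algebra.

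For the algebra $\mu_1$ the only nonvanishing structure constant is $\gamma_{11,2}=1$. Substituting into (\ref{transposedstructure}), I observe that $\beta_{pk}(x)=\sum_i(\gamma_{ki,p}+\gamma_{ik,p})x_i$ can be nonzero only when the third index $p$ equals $2$ and both remaining first indices equal $1$, i.e. only for $(p,k)=(2,1)$; there one gets $\beta_{21}(x)=(\gamma_{11,2}+\gamma_{11,2})x_1=2x_1$, while all other entries vanish. Hence the matrix of structural constants of $E^{T}_{(\mu_1)_x}$ is $\left(\begin{array}{cc}0 & 0\\ 2x_1 & 0\end{array}\right)$.

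It remains to recognise this as a nilpotent evolution algebra. The matrix is lower triangular, so performing the permutation of the natural basis that interchanges $\tilde e_1$ and $\tilde e_2$ converts it into the strictly upper-triangular matrix $\left(\begin{array}{cc}0 & 2x_1\\ 0 & 0\end{array}\right)$; by Theorem~\ref{nilpotentlik} the algebra $E^{T}_{(\mu_1)_x}$ is nilpotent for every $x\in\mathcal L$. Since both representatives in the classification yield nilpotent approximations, the proposition follows. I expect no genuine difficulty in this argument; the only point that must be handled with care is that the approximation comes out lower triangular rather than upper triangular, so one has to invoke explicitly the ``up to permutation of basis elements'' clause accompanying Theorem~\ref{nilpotentlik} instead of reading its statement literally.
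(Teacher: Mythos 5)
Your proposal is correct and follows essentially the same route as the paper: reduce via the classification in Theorem~\ref{Leibniznil2}, compute the matrix $\left(\begin{smallmatrix}0 & 0\\ 2x_1 & 0\end{smallmatrix}\right)$ of $E^{T}_{\mu_1,x}$ from (\ref{transposedstructure}), and conclude nilpotency from the triangular-form criterion of Theorem~\ref{nilpotentlik}. The only cosmetic difference is that you carry out the basis permutation (lower to upper triangular) explicitly, whereas the paper delegates that step by additionally citing Theorem~\ref{nilevolution}, whose proof contains exactly this reversal of the basis.
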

\begin{proof} Due Theorem \ref{Leibniznil2} it is easy to see that for abelian Leibniz algebra
corresponds abelian evoution algebra which structural constants obtained by (\ref{transposedstructure}).

For Leibniz algebra $\mu_{1}$ we define evolution algebra $E_{\mu_{1}, x}^T$ which structural constants defined by
(\ref{transposedstructure}):
\begin{quote}$\beta_{11}(x)=(\gamma_{11,1}+\gamma_{11,1})x_{1}+(\gamma_{12,1}+\gamma_{21,1})x_{2}=0$,\\
$\beta_{12}(x)=(\gamma_{21,1}+\gamma_{12,1})x_{1}+(\gamma_{22,1}+\gamma_{22,1})x_{2}=0$,\\
$\beta_{21}(x)=(\gamma_{11,2}+\gamma_{11,2})x_{1}+(\gamma_{12,2}+\gamma_{21,2})x_{2}=2x_{1}$,\\
$\beta_{22}(x)=(\gamma_{21,2}+\gamma_{12,2})x_{1}+(\gamma_{22,2}+\gamma_{22,2})x_{2}=0$,\\
\end{quote}
where $\gamma_{ij,k}$-structural constants of nilpotent Leibniz algebra.\\
Therefore matrix of structural constants of the evolution algebra $E_{\mu_{1},{x}}^T$ is:
$\left(\begin{array}{cccc}
0 & 0\\
2x_{1} & 0\end{array}\right)$. By Theorems \ref{nilevolution} and \ref{nilpotentlik} the evolution algebra $E_{\mu_{1}, {x}}^T$ is right nilpotent.
\end{proof}

\begin{proposition}
Approximation by transposed of the 3-dimensional nilpotent Leibniz algebra in any point $x\in\mathcal{L}$ is also nilpotent evolution algebra.
\end{proposition}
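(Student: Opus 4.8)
The strategy mirrors exactly the proof of Proposition~\ref{prop2-dimLeib}: since Theorem~\ref{Leibniznil3} gives a finite, explicit list of pairwise non-isomorphic $3$-dimensional nilpotent Leibniz algebras, I would simply run through the six cases $\lambda_1,\dots,\lambda_6$ one at a time. For each representative I would compute the structural constants $\beta_{pk}(x)$ of the transposed approximation $E^T_{\lambda_i,x}$ using formula~(\ref{transposedstructure}), assemble the resulting $3\times 3$ matrix of structural constants (whose entries are linear forms in $x_1,x_2,x_3$), and then exhibit nilpotency of the associated evolution algebra.

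First I would handle the abelian case $\lambda_1$ trivially: all $\gamma_{ij,k}=0$, so every $\beta_{pk}(x)=0$ and the approximation is the abelian (hence nilpotent) evolution algebra, as already noted in the previous proposition. For each of the remaining cases $\lambda_2,\dots,\lambda_6$, I would read off the nonzero $\gamma_{ij,k}$ from the bracket relations in Theorem~\ref{Leibniznil3}, substitute into~(\ref{transposedstructure}), and record the matrix $\mathcal M_{E^T_{\lambda_i,x}}=(\beta_{pk}(x))_{p,k=1}^3$. For instance, for $\lambda_2$ with $[e_1,e_1]=e_3$ the only nonzero constant is $\gamma_{11,3}=1$, so only $\beta_{31}(x)$ survives and the matrix is strictly lower triangular in the natural ordering.

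The key step in each case is to show the computed matrix corresponds to a nilpotent evolution algebra. By Theorem~\ref{nilpotentlik}, it suffices to verify that the matrix is upper (or lower) triangular with zero diagonal \emph{up to a permutation of the basis}, i.e.\ that after some relabelling $e_i\mapsto \tilde e_{\sigma(i)}$ it takes strictly triangular form. In the cases arising from $\lambda_2,\lambda_3,\lambda_5,\lambda_6$ the bracket relations only produce terms landing in $e_3$ (or in $e_2,e_3$), so each $\beta_{pk}(x)$ with $p\le k$ or on the diagonal vanishes identically, and a single reversal of the basis order (as in Theorem~\ref{nilevolution}) puts the matrix in upper-triangular shape with zero diagonal. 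For $\lambda_4(\alpha)$ and $\lambda_5$, which have several nonzero brackets, I would check directly that the diagonal entries $\beta_{pp}(x)$ all vanish and that the nonzero off-diagonal entries sit on one side after permuting the basis.

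The main obstacle I anticipate is case $\lambda_4(\alpha)$, since it carries the free parameter $\alpha$ and has three nonzero bracket relations ($[e_1,e_1]=e_3$, $[e_2,e_2]=\alpha e_3$, $[e_1,e_2]=e_3$), so its matrix of structural constants is the least sparse: I expect entries $\beta_{31}(x),\beta_{32}(x)$ (and possibly others) to be nonzero linear forms, and I must confirm that the diagonal still vanishes for all $\alpha$ and that the support of the matrix remains triangularizable by permutation independently of $\alpha$. If, for some case, a single basis reversal does not suffice, I would instead appeal to Theorem~\ref{nilpotentlik}(b)--(c) by exhibiting that the right-nilpotent chain $E^{<k>}$ terminates at $0$ after at most three steps, which holds because every product of natural basis vectors lands in the span of $e_3$ (the derived subalgebra), forcing $E^{<3>}=0$. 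Assembling the six verified cases then yields the claim that the transposed approximation is nilpotent at every point $x\in\mathcal L$.
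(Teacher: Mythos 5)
Your proposal is correct and follows essentially the same route as the paper: a case-by-case run through the classification of Theorem~\ref{Leibniznil3}, computing the matrices $(\beta_{pk}(x))$ via (\ref{transposedstructure}) and observing that each is strictly lower triangular with zero diagonal, hence nilpotent by Theorem~\ref{nilpotentlik} (the paper writes out only $\lambda_1$ and $\lambda_2$ in detail and declares the remaining cases similar). One small caveat: your fallback claim that every product of basis vectors lands in the span of $e_3$, forcing $E^{<3>}=0$, is false for $\lambda_6$ (there $\tilde e_1^{\,2}=2x_1\tilde e_2+x_2\tilde e_3$ has an $e_2$-component and the right-nilpotency index is $4$, not $3$), but this never matters because the primary triangularization argument succeeds in all six cases.
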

\begin{proof} Due Theorem \ref{Leibniznil3} it is easy to see that for abelian Leibniz algebra $\lambda_{1}$
corresponds abelian evolution algebra (in particular, nilpotent) with structural constants obtained by (\ref{transposedstructure}).

For Leibniz algebra $\lambda_{2}$ we define evolution algebra $E_{\lambda_2, {x}}^T$ which structural constants defined by
(\ref{transposedstructure}):
\begin{quote}
$\beta_{11}(x)=(\gamma_{11,1}+\gamma_{11,1})x_{1}+(\gamma_{12,1}+\gamma_{21,1})x_{2}+(\gamma_{13,1}+\gamma_{31,1})x_{3}=0$,\\
$\beta_{12}(x)=(\gamma_{21,1}+\gamma_{12,1})x_{1}+(\gamma_{22,1}+\gamma_{22,1})x_{2}+(\gamma_{23,1}+\gamma_{32,1})x_{3}=0$,\\
$\beta_{13}(x)=(\gamma_{31,1}+\gamma_{13,1})x_{1}+(\gamma_{32,1}+\gamma_{23,1})x_{2}+(\gamma_{33,1}+\gamma_{33,1})x_{3}=0$,\\
$\beta_{21}(x)=(\gamma_{11,2}+\gamma_{11,2})x_{1}+(\gamma_{12,2}+\gamma_{21,2})x_{2}+(\gamma_{13,2}+\gamma_{31,2})x_{3}=0$,\\
$\beta_{22}(x)=(\gamma_{21,2}+\gamma_{12,2})x_{1}+(\gamma_{22,2}+\gamma_{22,2})x_{2}+(\gamma_{23,2}+\gamma_{32,2})x_{3}=0$,\\
$\beta_{23}(x)=(\gamma_{31,2}+\gamma_{13,2})x_{1}+(\gamma_{32,2}+\gamma_{23,2})x_{2}+(\gamma_{33,2}+\gamma_{33,2})x_{3}=0$,\\
$\beta_{31}(x)=(\gamma_{11,3}+\gamma_{11,3})x_{1}+(\gamma_{12,3}+\gamma_{21,3})x_{2}+(\gamma_{13,3}+\gamma_{31,3})x_{3}=2x_{1}$,\\
$\beta_{32}(x)=(\gamma_{21,3}+\gamma_{12,3})x_{1}+(\gamma_{22,3}+\gamma_{22,3})x_{2}+(\gamma_{23,3}+\gamma_{32,3})x_{3}=0$,\\
$\beta_{33}(x)=(\gamma_{31,3}+\gamma_{13,3})x_{1}+(\gamma_{32,3}+\gamma_{23,3})x_{2}+(\gamma_{33,3}+\gamma_{33,3})x_{3}=0$,\\
\end{quote}
where $\gamma_{ij,k}$-structural constants of nilpotent Leibniz algebra $\lambda_{2}$.\\
Therefore matrix of structural constants of the evolution algebra $E_{\lambda_{2}, {x}}^T$ is:
$\left(\begin{array}{cccccc}
0 & 0 & 0\\
0 & 0 & 0\\
2x_{1} & 0 & 0\end{array}\right)$. By Theorems \ref{nilevolution} and \ref{nilpotentlik} the
evolution algebra  is right nilpotent.

For algebras $\lambda_{3}$, $\lambda_{4}(\alpha)$, $\lambda_{5}$ and $\lambda_{6}$ similarly
we can prove that corresponding evolution algebras
$E_{\lambda_{3}, x}^T$, $E_{\lambda_{4}(\alpha),x}^T$, $E_{\lambda_{5}, x}^T$ and
$E_{\lambda_{6}, x}^T$ are also nilpotent.
\end{proof}

Let $E$ be a $2$-dimensional evolution algebra over the field of real numbers.
Such algebras are classified in \cite{7}, \cite{8}:
\begin{theorem}\label{thmurodov}
Any two-dimensional real evolution algebra $E$ is isomorphic to one of the following
pairwise non-isomorphic algebras:

(i) $dim(E^{2})=1$:

$E_{1}$: $e_{1}e_{1}=e_{1}$, $e_{2}e_{2}=0$;

$E_{2}$: $e_{1}e_{1}=e_{1}$, $e_{2}e_{2}=e_{1}$;

$E_{3}$: $e_{1}e_{1}=e_{1}+e_{2}$, $e_{2}e_{2}=-e_{1}-e_{2}$;

$E_{4}$: $e_{1}e_{1}=e_{2}$, $e_{2}e_{2}=0$;

$E_{5}$: $e_{1}e_{1}=e_{2}$, $e_{2}e_{2}=-e_{2}$;

(ii) $dim(E^{2})=2$:

$E_{6}(a_{2};a_{3})$: $e_{1}e_{1}=e_{1}+a_{2}e_{2}$, $e_{2}e_{2}=a_{3}e_{1}+e_{2}$;
$1-a_{2}a_{3}\neq0$, $a_{2},a_{3}\in\mathbb{R}$. Moreover $E_{6}(a_{2};a_{3})$
is isomorphic to  $E_{6}(a_{3};a_{2})$.

$E_{7}(a_{4})$: $e_{1}e_{1}=e_{2}$, $e_{2}e_{2}=e_{1}+a_{4}e_{4}$, where $a_{4}\in\mathbb{R}$;
\end{theorem}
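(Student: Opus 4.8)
The plan is to turn the classification into an explicit group action on the structure matrix and then enumerate canonical representatives case by case according to $\dim E^2$, finishing with an invariant-based proof of pairwise non-isomorphism.

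First I would record the structural fact that makes evolution algebras tractable. Since $E$ is commutative (one of the basic properties recalled from \cite{11}) and a natural basis is characterized solely by the condition $e_ie_j=0$ for $i\neq j$, any algebra isomorphism $\varphi\colon E\to E'$ carries a natural basis to a natural basis, because $\varphi(e_1)\varphi(e_2)=\varphi(e_1e_2)=0$. Hence classifying two-dimensional evolution algebras up to isomorphism is the same as classifying their structure matrices $M=(a_{ik})$, defined by $e_i^2=\sum_k a_{ik}e_k$, up to the equivalence induced by changes of natural basis. Writing a new basis as $f_i=\sum_j P_{ij}e_j$ with $P=(P_{ij})$ invertible, the vanishing of cross terms gives $f_i^2=\sum_j P_{ij}^2\,e_j^2$, and re-expanding in the $f$-basis yields the transformation law
\[
M\ \longmapsto\ M'=P^{(2)}\,M\,P^{-1},
\]
where $P^{(2)}$ denotes the entrywise square $(P_{ij}^2)$. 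Such a $P$ is admissible only when $\{f_1,f_2\}$ is again natural, i.e. $f_1f_2=P_{11}P_{21}\,e_1^2+P_{12}P_{22}\,e_2^2=0$, which is the constraint $P_{11}P_{21}\,(a_{11},a_{12})+P_{12}P_{22}\,(a_{21},a_{22})=(0,0)$.

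Next I would split according to $\dim E^2={\rm rank}\,M$, noting $E^2=\langle e_1^2,e_2^2\rangle$ (the rank-$0$ abelian algebra being trivial). In case (ii), ${\rm rank}\,M=2$, the two rows of $M$ are independent, so the constraint forces $P_{11}P_{21}=P_{12}P_{22}=0$; invertibility then leaves exactly the diagonal and the anti-diagonal matrices. A diagonal $P={\rm diag}(\alpha,\delta)$ acts by $a_{11}\mapsto\alpha a_{11}$, $a_{22}\mapsto\delta a_{22}$, $a_{12}\mapsto(\alpha^2/\delta)a_{12}$, $a_{21}\mapsto(\delta^2/\alpha)a_{21}$, while the anti-diagonal choice swaps the two rows and columns. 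Normalizing: when both diagonal entries are nonzero I set them to $1$ to reach $E_6(a_2;a_3)$, with $\det M\neq0$ giving $1-a_2a_3\neq0$ and the residual swap producing the stated $E_6(a_2;a_3)\cong E_6(a_3;a_2)$; when a diagonal entry vanishes the rank-$2$ condition forces the canonical form $E_7(a_4)$. In case (i), ${\rm rank}\,M=1$, the rows are proportional and the naturality constraint cuts out a larger admissible family of $P$; a careful enumeration of the zero-patterns of the single nonzero row, combined with the real rescalings, reduces $M$ to one of $E_1,\dots,E_5$.

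I would then prove that the listed algebras are pairwise non-isomorphic by exhibiting isomorphism invariants: $\dim E^2$ separates cases (i) and (ii); within case (i) one distinguishes the algebras by the presence or absence of a nonzero idempotent, the dimension of the annihilator $\{z:zE=0\}$, and whether the induced squaring vanishes on $E^2$; within case (ii) one solves the matching equation $M'=P^{(2)}MP^{-1}$ for admissible (diagonal or anti-diagonal) $P$, which pins down exactly the relation $E_6(a_2;a_3)\cong E_6(a_3;a_2)$ and shows distinct $a_4$ give non-isomorphic $E_7(a_4)$.

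I expect the main obstacle to be two-fold, and both parts are tied to the ground field being $\mathbb{R}$ rather than $\mathbb{C}$. First, in the rank-one case the enlarged admissible family makes the subcase bookkeeping delicate, and because the diagonal action scales the off-diagonal entries by the squares $\alpha^2,\delta^2>0$, certain signs (as in $E_3$ and $E_5$) are genuine invariants that cannot be absorbed, so one must avoid over-normalizing. Second, establishing non-isomorphism within the continuous families $E_6$ and $E_7$ amounts to deciding real solvability of the matching equations, where the square-root obstruction over $\mathbb{R}$ does the essential work; this is precisely the phenomenon already visible in the explicit non-isomorphism computation carried out earlier in Example~5.
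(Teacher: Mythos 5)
A point of reference first: the paper never proves Theorem~\ref{thmurodov} --- it is quoted as a known classification from \cite{7} and \cite{8} --- so your proposal can only be judged on its own merits rather than against an internal argument. Your general machinery is the standard, correct one: since any isomorphism sends a natural basis to a natural basis, the problem becomes classifying structure matrices $M$ under $M\mapsto P^{(2)}MP^{-1}$ subject to the naturality constraint $P_{11}P_{21}\,e_1^2+P_{12}P_{22}\,e_2^2=0$, and your treatment of case (ii) is sound: rank $2$ forces $P$ diagonal or anti-diagonal, real cube roots give the normalization to $E_6$ or $E_7$, and the residual anti-diagonal swap yields exactly $E_6(a_2;a_3)\cong E_6(a_3;a_2)$.

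There are, however, two genuine gaps. First, the rank-one case, where five of the seven canonical forms live, is not actually carried out: ``a careful enumeration \dots reduces $M$ to one of $E_1,\dots,E_5$'' is an assertion, not an argument. In that case the rows of $M$ are $(a,b)$ and $\lambda(a,b)$, so the admissible $P$ form the larger quadric family $P_{11}P_{21}+\lambda P_{12}P_{22}=0$, and reducing modulo this action to exactly five real forms --- in particular isolating the sign classes $E_3$ and $E_5$ without merging or duplicating them --- is the bulk of the work of the theorem; as written it is missing. Second, the invariants you list for pairwise non-isomorphism in case (i) do not suffice: $E_2$ ($e_1^2=e_1$, $e_2^2=e_1$) and $E_5$ ($e_1^2=e_2$, $e_2^2=-e_2$) both have a nonzero idempotent ($e_1$, respectively $-e_2$), both have zero annihilator, and both have nonvanishing squaring on $E^2$ (note that if $u$ spans $E^2$ with $u^2=cu$, then $(\lambda u)^2=\lambda c(\lambda u)$, so only the vanishing of $c$ is an invariant, not its sign). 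To separate this pair you need precisely the real quadratic-form datum you gesture at only vaguely in your closing paragraph: the squaring map $z\mapsto z^2$ has coefficient $x^2+y^2$ for $E_2$ (anisotropic, no nonzero square-zero elements) but $x^2-y^2$ for $E_5$ (isotropic, a nonzero $z$ with $z^2=0$ exists). With the rank-one reduction actually performed and this invariant added, your outline would close up into a correct proof.
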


We have following propositions.
\begin{proposition}
The evolution algebra $E_{\mu_{1}, x}^T$ given in Proposition \ref{prop2-dimLeib} is\\
i) an abelian if $x_{1}=0$;\\
ii) isomorphic to $E_{4}$ if $x_{1}\neq0$.
\end{proposition}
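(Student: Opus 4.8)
The plan is to read off the multiplication of $E_{\mu_1,x}^T$ directly from the matrix of structural constants obtained in Proposition \ref{prop2-dimLeib}, namely $\left(\begin{array}{cc} 0 & 0 \\ 2x_1 & 0 \end{array}\right)$. In the natural basis this encodes $\tilde e_1^2 = 0$ and $\tilde e_2^2 = 2x_1\tilde e_1$, together with $\tilde e_1\tilde e_2 = 0$ (the latter holds automatically, since $E_{\mu_1,x}^T$ is an evolution algebra). Part (i) is then immediate: setting $x_1 = 0$ makes every $\beta_{pk}(x)$ vanish, so all products are zero and the algebra is abelian.

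For part (ii), assume $x_1 \neq 0$ and compare the table $\tilde e_1^2 = 0$, $\tilde e_2^2 = 2x_1\tilde e_1$ with that of $E_4$, where $e_1 e_1 = e_2$ and $e_2 e_2 = 0$. The essential observation is that the two tables coincide after interchanging the generators: in $E_{\mu_1,x}^T$ it is the \emph{second} generator that squares to a multiple of the first, whereas in $E_4$ it is the first generator that squares to the second. I would therefore define a linear map $\varphi$ by $\varphi(\tilde e_2) = e_1$ and $\varphi(\tilde e_1) = \frac{1}{2x_1}e_2$, and then verify that it carries every product of $E_{\mu_1,x}^T$ onto the corresponding product in $E_4$. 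The only relation requiring checking is $\varphi(\tilde e_2)^2 = e_1^2 = e_2 = \varphi(2x_1\tilde e_1) = \varphi(\tilde e_2^2)$; the remaining products vanish on both sides.

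The computation is entirely routine, and the only point needing a little care is the scaling. One must permute the basis first, because the surviving structural constant sits in the corner opposite to the one in $E_4$, and then rescale by the factor $\frac{1}{2x_1}$ in order to normalise that constant to $1$. Since we work over $\mathbb{R}$ and this factor may be chosen of either sign, the normalisation goes through for every nonzero $x_1$; thus no sign obstruction arises, and $\varphi$ is an isomorphism (its determinant equals $-\frac{1}{2x_1} \neq 0$). Hence $E_{\mu_1,x}^T \simeq E_4$ whenever $x_1 \neq 0$.
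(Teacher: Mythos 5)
Your proof is correct and follows essentially the same route as the paper: the paper proves (ii) by the change of basis $e_1'=\frac{1}{2x_1}e_2$, $e_2'=\frac{1}{2x_1}e_1$, which is exactly your swap-and-rescale isomorphism up to a harmless difference in the choice of scaling factors (the paper scales both new basis vectors by $\frac{1}{2x_1}$, you scale only one). Both maps are invertible and carry the table $\tilde e_1^2=0$, $\tilde e_2^2=2x_1\tilde e_1$ onto that of $E_4$, so nothing further is needed.
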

\begin{proof} i) Straightforward.

ii) Taking change of basis like $e_{1}'=\frac{1}{2x_{1}}e_{2},\,\,\,e_{2}'=\frac{1}{2x_{1}}e_{1}$ completes the proof.
\end{proof}

\begin{proposition}The followings hold:\\
\begin{itemize}
\item[1)] the evolution algebra $E_{\lambda_{2}, x}^{T}$ is isomorphic to evolution algebra with matrix of structural constants
$\left(\begin{array}{cccccc}
1 & 0 & 0\\
0 & 0 & 0\\
0 & 0 & 0\end{array}\right)$ if $x_{1}\neq0$;\\
\item[2)] the evolution algebra $E_{\lambda_{3}, x}^{T}$ is an abelian;\\
\item[3)] the evolution algebra $E_{\lambda_{4}(\alpha), x}^{T}$ is isomorphic to evolution algebra with matrix of structural constants\\
\begin{itemize}\item[3.1)] {$\left(\begin{array}{cccccc}
0 & 1 & 1\\
0 & 0 & 0\\
0 & 0 & 0\end{array}\right)$ if $(2x_{1}+x_{2})(x_{1}+2\alpha x_{2})\neq0$;}\\
\item[3.2)] {$\left(\begin{array}{cccccc}
0 & 1 & 0\\
0 & 0 & 0\\
0 & 0 & 0\end{array}\right)$ if $2x_{1}+x_{2}=0$ or $x_{1}+2\alpha x_{2}=0$;}
\end{itemize}
\item[4)] the evolution algebra $E_{\lambda_{5}, x}^{T}$ is isomorphic to evolution
algebra with matrix of structural constants\\
\begin{itemize}\item[4.1)] {$\left(\begin{array}{cccccc}
0 & 1 & 1\\
0 & 0 & 0\\
0 & 0 & 0\end{array}\right)$ if $x_{1}x_{2}\neq0$;}\\
\item[4.2)] {$\left(\begin{array}{cccccc}
0 & 1 & 0\\
0 & 0 & 0\\
0 & 0 & 0\end{array}\right)$ if $x_{1}=0$ or $x_{2}=0$;}
\end{itemize}
\item[5)] the evolution algebra $E_{\lambda_{6}, x}^{T}$ is isomorphic to evolution
algebra with matrix of structural constants \\
\begin{itemize}\item[5.1)] {$\left(\begin{array}{cccccc}
0 & 1 & 1\\
0 & 0 & 0\\
0 & 1 & 0\end{array}\right)$ if $x_{1}x_{2}>0$;}\\
\item[5.2)] {$\left(\begin{array}{cccccc}
0 & -1 & -1\\
0 & 0 & 0\\
0 & 1 & 0\end{array}\right)$ if $x_{1}x_{2}<0$;}
\item[5.3)] {$\left(\begin{array}{cccccc}
1 & 0 & 0\\
0 & 0 & 0\\
0 & 0 & 0\end{array}\right)$ if $x_{1}=0$;}\\
\item[5.4)] {$\left(\begin{array}{cccccc}
0 & 1 & 0\\
0 & 0 & 1\\
0 & 0 & 0\end{array}\right)$ if $x_{2}=0$;}
\end{itemize}
\end{itemize}
\end{proposition}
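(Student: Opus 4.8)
The plan is to handle each of the five nilpotent Leibniz algebras $\lambda_2,\lambda_3,\lambda_4(\alpha),\lambda_5,\lambda_6$ separately by first computing the matrix of structural constants of the transposed-approximation evolution algebra via formula (\ref{transposedstructure}), and then exhibiting an explicit change of basis bringing that matrix into the claimed canonical form. For each $\lambda_i$ I would write down the nonzero brackets $[e_r,e_s]=\sum_k\gamma_{rs,k}e_k$, read off the cubic structural constants $\gamma_{rs,k}$, and substitute them into $\beta_{pk}(x)=\sum_i(\gamma_{ki,p}+\gamma_{ik,p})x_i$ to obtain the $3\times 3$ matrix $(\beta_{pk}(x))$. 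This is the same bookkeeping already carried out explicitly for $\lambda_2$ in the previous proposition, so the first computational step is routine; the substance lies in the linear-algebra normalization that follows.

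First I would dispatch the two easy items. For $\lambda_3$, where $[e_1,e_2]=e_3$ and $[e_2,e_1]=-e_3$ with all other structural constants pairing antisymmetrically in $k=3$, each entry $\beta_{pk}(x)$ collects a symmetrized sum $\gamma_{ki,p}+\gamma_{ik,p}$ which annihilates the antisymmetric contributions, so the matrix is identically zero and $E_{\lambda_3,x}^T$ is abelian, giving item 2). For $\lambda_2$ the matrix was already found to be the single entry $2x_1$ in position $(3,1)$; rescaling $e_1'=\tfrac{1}{2x_1}e_1$ (when $x_1\neq 0$) sends this to a lone $1$ in the $(1,1)$ slot after the obvious permutation of basis vectors, yielding item 1).

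For $\lambda_4(\alpha)$, $\lambda_5$ and $\lambda_6$ I expect the computed matrix to have a single nonzero row (the third, indexed by $k=3$) built from the symmetrized constants, with entries that are linear forms in $x_1,x_2$. The normalization then amounts to scaling each basis vector $\tilde e_i\mapsto c_i\tilde e_i$, which transforms an entry $\beta_{pk}$ into $c_p^2 c_k^{-1}\beta_{pk}$ (using $\tilde e_i^2=\sum_k\beta_{ik}\tilde e_k$); the case splits in items 3), 4), 5) arise precisely according to which of these linear forms vanish — for instance $2x_1+x_2$ and $x_1+2\alpha x_2$ in $\lambda_4(\alpha)$, and the products $x_1x_2$ with its sign in $\lambda_6$. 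The main obstacle, and the only genuinely nontrivial point, is item 5): over $\mathbb{R}$ one cannot always scale a negative coefficient to $+1$ by a real square, so the sign of $x_1x_2$ becomes an honest invariant and forces the two distinct canonical forms in 5.1) and 5.2). I would verify there that the relevant diagonal ratio is fixed up to positive reals under all admissible scalings, so that the sign cannot be removed; the degenerate subcases 5.3) $x_1=0$ and 5.4) $x_2=0$ are then read off directly from the matrix once the corresponding linear forms degenerate. Throughout, the existence of the required change of basis is guaranteed by the nonvanishing hypotheses in each subcase, and right nilpotency of the resulting algebras follows from Theorem \ref{nilpotentlik} since every canonical form is strictly triangular up to permutation.
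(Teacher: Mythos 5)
Your overall strategy --- compute $(\beta_{pk}(x))$ from (\ref{transposedstructure}) for each $\lambda_i$, then exhibit a scaled permutation of the natural basis --- is exactly the paper's strategy, and your treatment of items 2), 3) and 4) essentially reproduces the paper's proof. But your handling of item 5) rests on a false premise. Since $\lambda_6$ has $[e_1,e_1]=e_2$ \emph{and} $[e_2,e_1]=e_3$, the products land in two different basis vectors, so $\gamma_{11,2}=\gamma_{21,3}=1$ and (\ref{transposedstructure}) gives
$$M_{E_{\lambda_{6},x}^{T}}=\left(\begin{array}{ccc}
0 & 0 & 0\\
2x_{1} & 0 & 0\\
x_{2} & x_{1} & 0\end{array}\right),$$
a matrix with \emph{two} nonzero rows, not the single-row shape you predicted. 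This is not a cosmetic slip: an evolution algebra whose only nonzero product is $\tilde e_{3}^{2}\in\langle\tilde e_{1},\tilde e_{2}\rangle$ satisfies $E^{<3>}=0$, whereas the canonical form in 5.4) has $f_{1}^{2}=f_{2}$, $f_{2}^{2}=f_{3}$ and hence $E^{<3>}\neq 0$; so none of the forms 5.1), 5.2), 5.4) can be reached from the matrix you expected, and your ``which linear forms vanish'' case split plus diagonal rescaling never gets off the ground here. The paper instead writes down explicit scaled permutations, e.g.\ for 5.1)/5.2) it takes $e_{1}'=\frac{x_{2}}{x_{1}\sqrt{2x_{1}x_{2}}}e_{3}$ (respectively $\sqrt{-2x_{1}x_{2}}$), $e_{2}'=\frac{x_{2}^{2}}{2x_{1}^{3}}e_{1}$, $e_{3}'=\frac{x_{2}}{2x_{1}^{2}}e_{2}$; the real square root of $\pm 2x_{1}x_{2}$ is precisely what produces the sign dichotomy, which is a different (and correct) mechanism from the single-entry scaling obstruction you describe.

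There is a second failure in item 1) (and the same issue recurs in 5.3)). You claim that rescaling plus ``the obvious permutation'' moves the lone entry $2x_{1}$ at position $(3,1)$ into the $(1,1)$ slot. That is impossible: a scaled permutation sends off-diagonal entries to off-diagonal entries, and more fundamentally the matrix with a single $1$ at $(1,1)$ describes an algebra with an idempotent ($f_{1}^{2}=f_{1}$), which cannot be isomorphic to the nilpotent algebra $E_{\lambda_{2},x}^{T}$. Here the defect is actually inherited from the statement itself: the paper's own change of basis $e_{1}'=\frac{1}{2x_{1}}e_{3}$, $e_{2}'=e_{2}$, $e_{3}'=\frac{1}{2x_{1}}e_{1}$ yields $e_{1}'^{2}=e_{3}'$, i.e.\ the matrix
$$\left(\begin{array}{ccc}
0 & 0 & 1\\
0 & 0 & 0\\
0 & 0 & 0\end{array}\right),$$
so the diagonal matrix printed in items 1) and 5.3) is a misprint for this off-diagonal unit form. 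A sound write-up should prove the off-diagonal version (as the paper's proof in fact does) and flag the discrepancy, rather than assert, as your proposal does, an isomorphism that cannot exist.
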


\begin{proof}
\begin{itemize}
\item[1)] $\left(\begin{array}{cccccc}
0 & 0 & 0\\
0 & 0 & 0\\
2x_{1} & 0 & 0\end{array}\right)$ is matrix of structural constants of the evolution
algebra $E_{\lambda_{2}, x}^{T}$. By changing of basis like
$$e_{1}'=\frac{1}{2x_{1}}e_{3},\,\,\, e_{2}'=e_{2},\,\,\, e_{3}'=\frac{1}{2x_{1}}e_{1}$$
we give proof of 1);\\
\item[2)] Clearly;\\
\item[3)] {$\left(\begin{array}{cccccc}
0 & 0 & 0\\
0 & 0 & 0\\
2x_{1}+x_{2} & x_{1}+2\alpha x_{2} & 0\end{array}\right)$ is matrix of structural constants of the evolution
algebra $E_{\lambda_{4}(\alpha), x}^{T}$. If $(2x_{1}+x_{2})(x_{1}+2\alpha x_{2})\neq0$
by changing of basis like
$$e_{1}'=e_{3},\,\,\, e_{2}'=(x_{1}+2\alpha x_{2})e_{2},\,\,\, e_{3}'=(2x_{1}+x_{2})e_{1}$$
we give proof of 3.1);\\
If $2x_{1}+x_{2}=0$ then by changing of basis like
$$e_{1}'=\frac{1}{(1-4\alpha)x_{1}}e_{3}, \ \ e_{2}'=\frac{1}{(1-4\alpha)x_{1}}e_{2}, \ \ e_{3}'=e_{1}$$
and if $x_{1}+2\alpha x_{2}=0$ by changing of basis like
$$e_{1}'=\frac{1}{(1-4\alpha)x_{2}}e_{3}, \ \ e_{2}'=\frac{1}{(1-4\alpha)x_{2}}e_{1}, \ \ e_{3}'=e_{2}$$ we give proof of 3.2);}\\

\item[4)]{It is similar to above case;}\\
\item[5)]{$\left(\begin{array}{cccccc}
0 & 0 & 0\\
2x_{1} & 0 & 0\\
x_{2} & x_{1} & 0\end{array}\right)$ is matrix of structural constants of the evolution
algebra $E_{\lambda_{6}, x}^{T}$.\\
If $x_{1}x_{2}>0$, we give change of basis:
$$e_{1}'=\frac{x_{2}}{x_{1}\sqrt{2x_{1}x_{2}}}e_{3}, \ \ e_{2}'=\frac{x_{2}^2}{2x_{1}^3}e_{1}, \ \
e_{3}'=\frac{x_{2}^2}{2x_{1}^2 x_{2}}e_{2}.$$
Therefore we have proof of 5.1).\\
If $x_{1}x_{2}<0$, we give change of basis:
$$e_{1}'=\frac{x_{2}}{x_{1}\sqrt{-2x_{1}x_{2}}}e_{3}, \ \ e_{2}'=\frac{x_{2}^2}{2x_{1}^3}e_{1}, \ \ e_{3}'=\frac{x_{2}^2}{2x_{1}^2 x_{2}}e_{2}.$$
Therefore we have proof of 5.2).\\
If $x_{1}=0$, this case is similar to 1).\\
If $x_{2}=0$, then $\left(\begin{array}{cccccc}
0 & 0 & 0\\
2x_{1} & 0 & 0\\
0 & x_{1} & 0\end{array}\right)$ is matrix of structural constants of the evolution
algebra $E_{\lambda_{6}, x}^{T}$. By changing of basis
$$e_{1}'=\frac{1}{\sqrt{2}x_{1}}e_{3}, \ \ e_{2}'=\frac{1}{2x_{1}}e_{2}, \ \ e_{3}'=\frac{1}{2x_{1}}e_{1}$$
one completes the proof of 5.4)}.
\end{itemize}
\end{proof}

\begin{remark} Another type of approximation of an algebra can be given as follows.

Let $\mathcal A$ be an $n$-dimensional algebra
over the field of complex (or real) numbers.

Denote by $M_{\mathcal A}$ the matrix of structural constants of $\mathcal A$.
In case of complex field the matrix $M_{\mathcal A}$ can be considered as a vector in $\mathbb C^{n^3}$.
Consider a norm of $M=(\gamma_{ijk})\in \mathbb C^{n^3}$ defined by
$$\|M\|=\max_{i,j,k}|\gamma_{ijk}|.$$

An algebra  $\mathcal B$  is called $\varepsilon$-approximation
of an algebra $\mathcal A$ if
$$\|M_{\mathcal A}-M_{\mathcal B}\|\leq \varepsilon.$$

It is clear that if $\mathcal B$ is $\varepsilon$-approximation for $\mathcal A$ then
$\mathcal A$ is $\varepsilon$-approximation for $\mathcal B$.

The interesting question is ``How properties of $\mathcal A$ depends on properties of $\mathcal B$?"

Let $\mathcal A^{[s,t]}$, $s, t\geq 0$ be a flow of $n$-dimensional algebras
over the field of complex (or real) numbers (see \cite{LR4}, \cite{LR3}).
Each flow of algebras can be considered as a continuous time dynamical system such that
in each fixed pair $(s,t)$ of time
its state is a finite-dimensional algebra.
In such a dynamical system one wants to know what
will be the limit algebra (if it exists):
\begin{equation}\label{aw}
\lim_{t-s\to+\infty}\mathcal A^{[s,t]}=\mathcal A.
\end{equation}
In case the limit does not exist then one asks about possible limit algebras.

Assume the limit (\ref{aw}) exists, that is for any $\varepsilon>0$ there exists
$T=T(\varepsilon)>0$ such that for any $t,s$ with $|t-s|>T$ we have
$$\|M_{\mathcal A^{[s,t]}}-M_{\mathcal A}\|\leq \varepsilon.$$

Thus the limit algebra $\mathcal A$ can be considered as  $\varepsilon$-approximation
for each $\mathcal A^{[s,t]}$ with $|t-s|>T$.
\end{remark}

\end{document}